\documentclass{amsart}

\usepackage{palatino,amsmath,amsthm}
\usepackage{amssymb,amsfonts,amscd}
\usepackage{mathtools,thmtools,thm-restate}
\usepackage{fullpage}
\usepackage{microtype}
\usepackage{needspace}
\usepackage[T1]{fontenc}
\usepackage{enumitem}
\usepackage{tikz}
\usepackage{hyperref}
\usepackage[capitalise,noabbrev]{cleveref}
\usepackage{placeins}
\usepackage{caption}
\hypersetup{
  colorlinks=true,
  linkcolor=blue,
  citecolor=red,
  urlcolor=blue
}

\numberwithin{equation}{section}
\setlist[itemize]{leftmargin=1.8em,itemsep=0.2em,topsep=0.25em}
\setlist[enumerate]{leftmargin=2em,itemsep=0.2em,topsep=0.25em}
\allowdisplaybreaks[1]
\hypersetup{
  pdftitle={First-Derivative Chromatic Symmetric Reconstruction for Proper Trees},
  pdfauthor={Saad A. Awan}
}

\newtheorem{lemma}{Lemma}[section]
\newtheorem{proposition}[lemma]{Proposition}

\newcommand{\Q}{\mathbb Q}
\newcommand{\Comp}{\operatorname{Comp}}
\newcommand{\one}[1]{\mathbf 1_{\{#1\}}}

\title[First-Derivative Reconstruction]{First-Derivative Chromatic Symmetric\\ Reconstruction for Proper Trees}
\author{Saad A. Awan}
\address{Department of Mathematics, University of Kansas, Lawrence, Kansas 66045}
\email{saad\_awan@ku.edu}

\date{September 2026}

\begin{document}

\begin{abstract}
Let $T$ be a tree. Stanley asked whether the chromatic symmetric function $X_T$ determines $T$ up to isomorphism. We approach this open problem by regarding $X_T$ as a polynomial in the power-sum symmetric functions $p_1, p_2, \dots$ and studying the invariant $\Phi_T = \left.\frac{\partial {X_T}}{\partial p_1}\right\rvert_{p_1 = 0}$. We prove that $\Phi_T$ distinguishes every proper tree whose weighted skeleton, the tree obtained from $T$ by weighted contraction of all leaf edges, has distinct weights at non-leaf vertices. We prove further an equivalent formulation of Stanley's question obtained by attaching a fixed positive number of leaves to
every vertex of a tree. Finally, we count spanning forests with at most $t$ edges, grouping
them by the sizes of their connected components. We prove that these
counts cannot distinguish all trees on $k\ge4$ vertices unless
$t\ge\lfloor k/2\rfloor$.

\end{abstract}

\maketitle

\tableofcontents

\section*{Introduction}

For a finite simple graph $G=(V,E)$, let
$\operatorname{PCol}(G)$ be the set of all \emph{proper} colorings
$\kappa\colon V\to\mathbb{N}^+$, meaning $\kappa(v) \neq \kappa(w)$ whenever $v$ and $w$ are adjacent. Stanley's chromatic symmetric function is defined
\[
X_G= X_G(x_1, x_2, \dots)=\sum_{\kappa\in\operatorname{PCol}(G)}\prod_{v\in V}x_{\kappa(v)},
\]
where the sum is over all proper colorings $\kappa$, and $\{x_1, x_2, \dots\}$ is a countably infinite set containing commuting indeterminates. Under any permutation of $\{x_i\}$ this function remainds unchanged, and thus is a symmetric function.
Stanley~\cite{Stanley1995} introduced this invariant and asked whether for \emph{trees} it is a complete isomorphism invariant. Equivalently, let $T, T'$ be two trees. If $X_T = X_{T'}$, does it follow that $T \cong T'$? The question remains open and is considered to be a hard problem. 

Stanley's question has been answered in the affirmative for several classes of trees.
Martin, Morin, and Wagner~\cite{Mart2007} showed that all \emph{spiders} and certain
\emph{caterpillars} are determined by their chromatic symmetric function.
They also showed that nonisomorphic \emph{squids}, which are unicyclic
graphs, have different chromatic symmetric functions.
Aliste-Prieto and Zamora~\cite{AZ2014} answered Stanley's question positively for \emph{proper
caterpillars}, and Loebl and Sereni~\cite{LS2019} removed the properness
hypothesis, settling the case of all caterpillars.
Huryn and Chmutov~\cite{HC2020} generalised the spider result to
\emph{$2$-spiders}.
More recently, Aliste-Prieto, de Mier, Orellana, and Zamora~\cite{APdMOZ2023} used the star
basis and their deletion--near-contraction relation to show $X_T$ distinguishes
proper trees of diameter at most five.
Gonzalez, Orellana, and Tomba~\cite{GOT2025} subsequently removed the properness
hypothesis, reconstructing every tree of diameter at most five directly
from its chromatic symmetric function.

One particular approach which has proved to be productive has been to
differentiate the chromatic symmetric function with respect to the
power-sum variables $p_j$. Although the idea originates with Stanley, it was not exploited very much until Wang, Yu, and Zhang~\cite{WYZ2024} showed that trees with
exactly two vertices of degree at least three are determined by their
chromatic symmetric function using this approach.
Our strategy throughout will also be to use the first derivative of the chromatic symmetric function, $\frac{\partial X_T}{\partial p_1}$.

Recall that the \emph{degree} of a vertex is the number of vertices adjacent to it. We define a \emph{leaf} to be a vertex with degree $1$. We define a tree $T$ with at least three vertices to be \emph{proper} when every non-leaf vertex is adjacent to a leaf.  Let $c_1,\ldots,c_k$ be the non-leaf vertices of $T$. We assign a weight of $1$ to each $c_i$, let us denote it $w(i)$.
For each $c_i$, contract every edge adjacent to a leaf vertex, and for each contraction, increase $w(i)$ by $1$. Doing this for all $c_i$, we are left with a weighted tree which we call the \emph{weighted skeleton}. The
\emph{weighted skeleton} $(S,\mathbf w)$ is a pair with $S = (V,E)$, a labeled tree on $k$ vertices and $\mathbf{w} = (w_1, w_2, \dots, w_k)$ denoting the weight for each labeled vertex. We call
$(S,\mathbf w)$ \emph{multiplicity-isolated} if two vertices of $S$ have the same weight, then they occur as leaves. It will be noted that one can reconstruct any tree $T$ given its weighted skeleton and vice-versa. We refer the reader to Figure~\ref{fig:weighted-skeleton} for a visual example.

Zeng~\cite{Zeng2026} has proved that $X_T$
distinguishes every proper tree $T$ whose weighted skeleton is multiplicity-isolated. We show that in fact, every proper tree $T$ whose weighted skeleton is multiplicity-isolated can be distinguished by an ostensibly weaker invariant, as we now explain.

Let $\pi$ be the ring
endomorphism of $\Q[p_1,p_2,\ldots]$ defined by
\[
\pi(p_1)=0,
\qquad
\pi(p_k)=p_k\quad(k\ge2).
\]
and let
\[
\Phi_G=\pi\left(\frac{\partial X_G}{\partial p_1}\right) = \left.\frac{\partial X_G}{\partial p_1} \right\rvert_{p_{1} = 0}.
\]

We are ready to state our main results, of which there are three. 
\begin{restatable}{theorem}{palm}\label{thm:palm}
If the weighted skeleton of a proper tree $T$ is multiplicity-isolated and
$T'$ is any proper tree, then
\[
\Phi_T=\Phi_{T'}\implies T\cong T'.
\]
\end{restatable}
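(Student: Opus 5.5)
Write $n=|V(T)|$ and let $L$ be the set of leaves of $T$. I would start from the standard power-sum expansion $X_T=\sum_{A\subseteq E(T)}(-1)^{|A|}p_{\lambda(A)}$, where $\lambda(A)$ is the partition formed by the component sizes of the spanning forest $(V,A)$.

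Differentiating with respect to $p_1$ and then setting $p_1=0$ keeps only the forests with exactly one singleton component, each with coefficient $1$. Properness pins these forests down completely. A leaf $\ell$ is a singleton exactly when its edge is missing from $A$. Every non-leaf vertex has a pendant leaf, so it cannot be a singleton unless the removed leaf edge is its only one. Hence the admissible $A$ consist of all leaf edges except exactly one, together with an arbitrary subset $B\subseteq E(S)$ of skeleton edges. Any resulting size-one component other than the removed leaf is killed by $\pi$ automatically.

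This gives the first key formula, which I would prove as a lemma:
\[
\Phi_T=(-1)^{|L|-1}\sum_{i=1}^k (w_i-1)\,\pi\bigl(X^{p}_{(S,\mathbf w-\mathbf e_i)}\bigr),
\qquad X^p_{(S,\mathbf u)}=\sum_{B\subseteq E(S)}(-1)^{|B|}p_{\lambda_{\mathbf u}(B)},
\]
where $\lambda_{\mathbf u}(B)$ lists the total $\mathbf u$-weights of the components of $(V(S),B)$. So $\Phi_T$ is a signed, weighted sum of weighted chromatic symmetric functions of the skeleton, one for each way of lowering a single weight by one.

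The second step is to read off coarse data from low-length terms.
\begin{itemize}
\item The one-part term $p_{n-1}$ (from $B=E(S)$) has coefficient $\pm(|L|)$, so it recovers $n$ and $|L|$.
\item The two-part terms (from $B=E(S)\setminus\{e\}$) give, for each skeleton edge $e$ with sides of weight $a$ and $n-a$, the contributions $p_{a-1}p_{n-a}$ and $p_a p_{n-a-1}$ with multiplicities $\#\text{leaves on each side}=a-(\text{number of skeleton vertices on that side})$.
\item The three-part terms similarly encode pairs of edges and the weights of the middle components.
\end{itemize}
Since $p_1$ terms vanish, I must track which of these contributions survive $\pi$. Pendant skeleton vertices of weight $2$ are the delicate case.

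The third step, and the heart of the argument, is to reconstruct $(S,\mathbf w)$ from this data. I would follow the strategy of Zeng's result, but applied to the shifted weights. The aim is to identify the branch weights $a$ of all edges, together with the number of skeleton vertices on each side, and then rebuild the skeleton by peeling leaves of $S$ inductively.

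Multiplicity-isolation enters here. Because internal skeleton vertices have distinct weights, a term such as $p_{w_v}p_{\cdot}p_{\cdot}$ (with $v$ isolated in $B$ and its neighbours' branches separated) can be attributed to a unique vertex $v$. This lets me recover the degree of each internal vertex and the weights of its incident branches, and hence the adjacency structure.

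The main obstacle will be disentangling the superposition over $i$. Each shifted function $X^p_{(S,\mathbf w-\mathbf e_i)}$ contributes terms that look like those of a skeleton with a different weight vector, so collisions between partitions coming from different $i$, or cancellations caused by signs, could hide information. My first attempt would be a dominance or lex argument: compare the extremal partitions (largest parts, fewest parts) of $\Phi_T$ and $\Phi_{T'}$, show these come with nonzero coefficients that determine the largest branch weights, and then induct on $k$. At each stage I would remove a leaf of $S$ whose weight has been identified, and use multiplicity-isolation to make that identification unambiguous even though $T'$ is only assumed proper.
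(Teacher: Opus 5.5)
Your first step is correct. With $|L|=n-k$, your formula is equivalent to the paper's master formula; the paper reaches it through $\partial X_T/\partial p_1=\sum_v X_{T-v}$, but the bookkeeping is the same. Reading $n$ and $k$ off the degree and $[p_{n-1}]\Phi_T$ is also what the paper does.

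The gap is the third step. It is a plan rather than an argument, and the one concrete mechanism it offers does not work. A factor $p_{w_v}$ cannot be attributed to the vertex $v$. Multiplicity-isolation only says that no other \emph{vertex} has weight $w_v$. But a multi-vertex component $C$ of $(V(S),B)$ with $\mathbf w(C)=w_v$ produces exactly the same factor. So does a shifted component containing $i$ with $\mathbf w(C)-1=w_v$, for instance $C=\{i\}$ with $w_i=w_v+1$. Beyond that, the few-part layers record global cut data. You give no argument that this data determines $(S,\mathbf w)$ after the cancellations across $B$ and across $i$ that you yourself flag, nor why the extremal coefficients you want are nonzero. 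Finally, nothing controls $T'$, which is only assumed proper. You would need an actual argument that $T'$ inherits enough structure for your identifications to transfer.

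The missing idea is to work at the opposite end, in the $k$-part and $(k-1)$-part layers, where the superposition over $i$ is tractable.

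For $B=\varnothing$, the sum over $i$ is exactly the derivation $\Delta=\sum_{a\ge3}(a-1)p_{a-1}\,\partial/\partial p_a$ applied to $P_T=\prod_i p_{w_i}$; the map $\pi$ removes precisely the terms with $w_i=2$. So $[\Phi_T]_k=\varepsilon\Delta P_T$. A leading-term comparison in lex order with $p_2>p_3>\cdots$ shows that $\Delta$ separates power-sum monomials in $p_2,p_3,\ldots$, with $\Delta Q=0$ only for $Q=p_2^d$. This recovers the weight multiplicities $m_a$.

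For $|B|=1$, one gets $[\Phi_T]_{k-1}=-\varepsilon\sum_{a\le b}e_{ab}\Theta_{ab}$. Here $e_{ab}$ counts skeleton edges with endpoint weights $\{a,b\}$, and
\[
\Theta_{ab}=(a+b-2)p_{a+b-1}P_T/(p_ap_b)+p_{a+b}\Delta\bigl(P_T/(p_ap_b)\bigr)
\]
depends only on the already known $P_T$. The $\Theta_{ab}$ over feasible pairs are linearly independent: they are triangular in $a+b$, detected by the monomial $p_{a+b-1}P_T/(p_ap_b)$. So every $e_{ab}$ is determined.

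Multiplicity-isolation then finishes the proof combinatorially. For a repeated weight $a$, the degree sum $2e_{aa}+\sum_{b\ne a}e_{ab}$ of the weight-$a$ vertices equals $m_a$ in $S$, hence also in $S'$. So every weight-$a$ vertex of $S'$ is a leaf, and $S'$ is multiplicity-isolated too. Then every non-leaf vertex carries a unique weight. The counts $e_{ab}$ for unique $a,b$ give the adjacencies among these vertices. The counts $e_{ab}$ for unique $a$ and repeated $b$ give the number of weight-$b$ leaves at the vertex of weight $a$. This pins down $(S,\mathbf w)$, and hence $T$.
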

This result implies the theorem of Zeng~\cite{Zeng2026} mentioned above.\\

Our second theorem asserts an equivalent formulation of Stanley's question. For $q\ge2$, the \emph{uniform leaf extension} of $G$ is the graph $C_q(G)$ obtained by attaching $q-1$ new leaves to every vertex $v \in V(G)$.

\begin{restatable}{theorem}{Cq}\label{thm:Cq}
For fixed $q\ge2$ and all finite connected graphs $G,H$,
\[
\Phi_{C_q(G)}=\Phi_{C_q(H)}
\quad\Longleftrightarrow\quad
X_G=X_H.
\]
\end{restatable}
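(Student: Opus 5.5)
The plan is to use Stanley's power-sum expansion
\[
X_G=\sum_{A\subseteq E(G)}(-1)^{|A|}p_{\lambda(A)},
\]
where $\lambda(A)$ is the partition of $|V(G)|$ given by the component sizes of the spanning subgraph $(V(G),A)$. Differentiating in $p_1$ and then setting $p_1=0$ keeps exactly those edge sets whose spanning subgraph has \emph{exactly one} isolated vertex. So $\Phi_H$ is the signed sum of $p_{\lambda(S)\setminus\{1\}}$ over such $S\subseteq E(H)$. We apply this with $H=C_q(G)$, where $n=|V(G)|$, and write an edge set as $S=A\cup L$ with $A\subseteq E(G)$ and $L$ a set of leaf edges.

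\textbf{Step 1: formula for $\Phi_{C_q(G)}$ in terms of $X_G$.} Every omitted leaf edge produces an isolated new leaf. An original vertex is never isolated when all its $q-1\ge1$ leaf edges are kept. Hence the sets with exactly one singleton are exactly those where $L$ misses a single leaf edge, say at $v$. For such a set, a component of $(V(G),A)$ of size $m$ becomes a component of size $qm$ in $C_q(G)$, except that the component containing $v$ becomes $qm-1$. When $q=2$ and $m=1$, the vertex $v$ is also isolated, giving two singletons, so these terms are killed by $\pi$. Summing over the $q-1$ choices of leaf at $v$ and the $m$ choices of $v$ in a size-$m$ component gives
\[
\Phi_{C_q(G)}=(-1)^{n(q-1)-1}(q-1)\,\sigma\Bigl(\sum_{m\ge1} m\,y_m\,\frac{\partial X_G}{\partial p_m}\Bigr).
\]
Here $\sigma$ is the ring map $p_m\mapsto p_{qm}$, and $y_m$ denotes $p_{qm-1}$, with $y_1:=0$ if $q=2$. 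This immediately gives the direction $X_G=X_H\Rightarrow\Phi_{C_q(G)}=\Phi_{C_q(H)}$, since $n=\deg X_G$ is determined by $X_G$.

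\textbf{Step 2: recovering $X_G$.} First, $n$ is read off from $\Phi_{C_q(G)}$, which is homogeneous of degree $qn-1$ (and it is nonzero, e.g.\ the $A=\emptyset$ term is $\pm(q-1)\,p_{q-1}p_q^{n-1}$ for $q\ge3$, and analogous terms exist for $q=2$ from $A$ a single edge). So the sign and the factor $q-1$ can be removed. The variables $p_{qm}$ and $p_{qm-1}$ ($m\ge1$) are pairwise distinct and algebraically independent, and $\sigma$ is injective. Moreover the expression is linear in the $y_m$. Therefore extracting the coefficient of each $y_m$ and applying $\sigma^{-1}$ recovers $\partial X_G/\partial p_m$ for every $m\ge1$ when $q\ge3$, and for every $m\ge2$ when $q=2$.

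For $q\ge3$, the weighted Euler identity for the degree-$n$ polynomial $X_G$,
\[
nX_G=\sum_m m\,p_m\,\frac{\partial X_G}{\partial p_m},
\]
then returns $X_G$. For $q=2$, knowing all $\partial X_G/\partial p_m$ with $m\ge2$ determines $X_G$ up to adding $c\,p_1^n$. The coefficient of $p_1^n$ in $X_G$ is $1$, coming only from $A=\emptyset$, so $X_G$ is determined here too.

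\textbf{Main obstacle.} The main difficulty is the bookkeeping in Step 1: getting the multiplicity $(q-1)m$ right, and correctly handling the degenerate case $q=2$, $m=1$, where the part $qm-1$ equals $1$. That case is exactly what forces the separate argument via the $p_1^n$ coefficient.
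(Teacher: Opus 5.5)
Your route is essentially the paper's. Your Step~1 formula is the paper's identity $\Phi_{C_q(G)}=(-1)^{(q-1)n-1}(q-1)\mathcal D_qX_G$; write it as $\sum_m m\,y_m\,\sigma(\partial X_G/\partial p_m)$, since your $\sigma$ must not act on $y_m=p_{qm-1}$. Your coefficient extraction, followed by Euler's identity (or by the $p_1^n$ coefficient when $q=2$), carries the same content as the paper's computation of $\ker\mathcal D_q$ together with homogeneity.

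There is one genuine gap. Your claim that $\Phi_{C_q(G)}$ is always nonzero is false when $q=2$ and $G=K_1$: then $C_2(K_1)=K_2$ and $\Phi_{K_2}=\pi(2p_1)=0$. Your ``single edge'' terms exist only if $G$ has an edge. When the common value of $\Phi$ is $0$, you cannot read $n$ off the degree. Your coefficient extraction then only gives $X_G=p_1^{n_G}$ and $X_H=p_1^{n_H}$, with no link between $n_G$ and $n_H$.

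A telltale sign is that your proof never uses that $G$ and $H$ are connected. Without connectedness the statement is false for $q=2$: take $G=K_1$ and $H$ the edgeless graph on two vertices. Then $\Phi_{C_2(G)}=\Phi_{C_2(H)}=0$, but $X_G=p_1\neq p_1^2=X_H$.

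The repair is short. For $q=2$ and $G$ with at least one edge, only single-edge sets contribute to the coefficient of $p_3p_2^{n-2}$ in $\Phi_{C_2(G)}$, and that coefficient is $\pm2|E(G)|\neq0$. So $\Phi_{C_2(G)}=0$ exactly when $G$ is edgeless, which for a connected graph means $G\cong K_1$. Hence a common value $0$ forces $G\cong H\cong K_1$. This is precisely the case the paper treats separately, using $[p_2p_1^{t-2}]X_G=-|E(G)|\neq0$ for connected $G$ of order $t>1$.
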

Since all trees are finite connected graphs, the consequence of this result is that the chromatic symmetric function distinguishes trees if and only if the invariant $\Phi_{C_q(T)}$ does.\\

Before we state our final theorem, we define some preliminary notation.
Let $\mathcal T_k$ be the set of isomorphism classes of $k$-vertex trees.
For $S\in\mathcal T_k$ and $F\subseteq E(S)$, the \emph{component type}
of $F$ is the integer partition of $k$ formed by the orders of the
connected components of the spanning forest $(V(S),F)$.
For a partition $\mu\vdash k$, let $N_\mu(S)$ be the number of edge sets
$F\subseteq E(S)$ of component type $\mu$.
Write $\ell(\mu)$ for the number of parts of $\mu$. For each nonnegative
integer $t$, define
\[
\mathcal P_{k,t}
=\{\mu\vdash k\mid \ell(\mu)\ge k-t\}.
\]
We define the function
\[
\sigma_t\colon\mathcal T_k\longrightarrow
\mathbb Z_{\ge0}^{\mathcal P_{k,t}},
\qquad
\sigma_t(S)=\bigl(N_\mu(S)\bigr)_{\mu\in\mathcal P_{k,t}}.
\]
Since $(V(S),F)$ has $k-|F|$ connected components, the truncation
$\sigma_t(S)$ records the number of edge sets of each component type
among those containing at most $t$ edges.

Let
\[
\tau(k)=
\min\Bigl(
\{t\in\mathbb Z_{\ge0}\mid
\sigma_t\text{ is injective on }\mathcal T_k\}\cup\{\infty\}
\Bigr).
\]
Thus $\tau(k)$ is finite exactly when some truncation distinguishes all
$k$-vertex trees up to isomorphism.  A larger value means that more
edge-set layers are required.  A lower bound on $\tau(k)$ shows that no
shallower truncation can distinguish every tree. We demonstrate with an example. 

For $k=5$, let $A$, $B$, and $C$ be the three trees on five vertices, up to isomorphism, labeled as in Figure~\ref{fig:five-vertex-truncations}.

\begin{figure}[ht]
\centering
\begin{tikzpicture}[
  vertex/.style={circle,draw,fill=white,minimum size=5mm,inner sep=0pt}
]
\begin{scope}
\node[vertex] (a5) at (0,0) {$5$};
\node[vertex] (a1) at (0.8,0) {$1$};
\node[vertex] (a2) at (1.6,0) {$2$};
\node[vertex] (a3) at (2.4,0) {$3$};
\node[vertex] (a4) at (3.2,0) {$4$};
\draw (a5)--(a1)--(a2)--(a3)--(a4);
\node at (1.6,-1.25) {$A$};
\end{scope}
\begin{scope}[xshift=4.5cm]
\node[vertex] (b1) at (0,0) {$1$};
\node[vertex] (b2) at (0.9,0) {$2$};
\node[vertex] (b3) at (1.8,0) {$3$};
\node[vertex] (b4) at (2.7,0) {$4$};
\node[vertex] (b5) at (0.9,0.85) {$5$};
\draw (b1)--(b2)--(b3)--(b4);
\draw (b2)--(b5);
\node at (1.35,-1.25) {$B$};
\end{scope}
\begin{scope}[xshift=8.5cm]
\node[vertex] (c2) at (0.9,0) {$2$};
\node[vertex] (c1) at (0,0.65) {$1$};
\node[vertex] (c3) at (1.8,0.65) {$3$};
\node[vertex] (c4) at (0,-0.65) {$4$};
\node[vertex] (c5) at (1.8,-0.65) {$5$};
\draw (c2)--(c1) (c2)--(c3) (c2)--(c4) (c2)--(c5);
\node at (0.9,-1.25) {$C$};
\end{scope}
\end{tikzpicture}
\caption{The three trees on five vertices up to isomorphism.}
\label{fig:five-vertex-truncations}
\end{figure}

To compute $\sigma_t$, we enumerate the component types of all edge-sets with $t$ or fewer vertices. For instance, the components of
$F=\{12,23\}$ are
\[
\{1,2,3\},\qquad\{4\},\qquad\{5\},
\]
so the edge set $F$ contributes to $N_{(3,1,1)}(A)$, and $\lvert F \rvert = k - \ell(\mu) = 5 - 3 = 2$. The counts through two selected
edges are
\[
\begin{array}{c|c|ccc}
\lvert F \rvert & \mu
& N_\mu(A) & N_\mu(B) & N_\mu(C) \\
\hline
0 & (1,1,1,1,1) & 1 & 1 & 1 \\
1 & (2,1,1,1)   & 4 & 4 & 4 \\
2 & (3,1,1)     & 3 & 4 & 6 \\
2 & (2,2,1)     & 3 & 2 & 0
\end{array}
\]
The first two rows agree hence $\sigma_1$ cannot distinguish these trees. On the other hand, $\sigma_2$ distinguishes all three trees, and
\[
\tau(5)=2=\left\lfloor\frac{5}{2}\right\rfloor.
\]

\begin{restatable}{theorem}{counting}\label{thm:counting}
For every $k\ge4$,
\[
\tau(k)\ge\left\lfloor\frac{k}{2}\right\rfloor.
\]
Moreover, for every fixed $t\ge0$,
\[
\frac{
\bigl|\{T\in\mathcal T_k\mid
\sigma_t^{-1}(\sigma_t(T))=\{T\}\}\bigr|
}{
|\mathcal T_k|
}
\longrightarrow0
\qquad(k\to\infty).
\]
\end{restatable}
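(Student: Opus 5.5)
The plan is to prove both parts by one mechanism. When a tree contains a long bare path, moving a pendant vertex along that path leaves every edge set with at most $t$ edges "seeing" the same local picture. The first step is to reinterpret $N_\mu(S)$ for $\mu\in\mathcal P_{k,t}$. An edge set $F$ with $|F|\le t$ is the same thing as a family of pairwise vertex-disjoint subtrees, each with at least one edge, whose total number of edges is at most $t$. The component type of $F$ is then the multiset of their orders, padded with $1$'s. So $\sigma_t(S)$ counts such small families by their multiset of sizes, and I will compare these counts between two trees through explicit bijections.

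\emph{Lower bound on $\tau(k)$.} For the lower bound I take spiders $S(a,b,1)$: a path $P_{k-1}$ with one extra leaf attached at the vertex $c$ in position $a+1$, so that $a+b=k-2$. I split the edge sets by whether they contain the pendant edge $e$.
\begin{itemize}
\item Edge sets avoiding $e$ are forests of $P_{k-1}$ plus an isolated vertex. These contributions are identical for every choice of $a$.
\item Edge sets containing $e$ correspond bijectively to forests of $P_{k-1}$ with at most $t-1$ edges. In the type, the component containing $c$ has its size increased by one.
\end{itemize}
I will show that, for a fixed multiset of component sizes, the number of forests of $P_{k-1}$ in which the part covering position $p$ has a prescribed size is independent of $p$. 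This holds as long as $p$ is at distance at least $t-1$ from both ends. The reason is that a forest is a composition of the path, and all its non-singleton parts fit inside a window of length at most $2t-1$ around $p$. Translating that window is then a bijection, provided it never hits an end. Hence two spiders $S(a,b,1)$ and $S(a',b',1)$ with $\{a,b\}\ne\{a',b'\}$ and all legs at least $t$ have equal $\sigma_{t}$ but are nonisomorphic. This yields $\tau(k)>t$ whenever $k-2\ge 2t+1$.

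The main obstacle is the parity gap. The spider construction gives $\tau(k)\ge\lfloor (k-1)/2\rfloor$, which is one short of $\lfloor k/2\rfloor$ when $k$ is even. For even $k$ I would first sharpen the translation argument. When the component through $c$ contains $e$, it uses at most $t-1$ path edges, so the relevant window has only $t-1$ edges and the distance requirement on one side can drop by one. I would check this against the $k=4,6$ data, where $\tau(4)\ge 2$ must already hold for the path $P_4$ versus the star $K_{1,3}$. If the refinement fails, I would replace the single leaf by a pendant $P_2$ and run the same bijection.

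\emph{Density statement.} For the density statement, fix $t$ and a gadget consisting of a bare path of length $L\gg t$ with a pendant leaf at an interior position $p$, attached to the rest of the tree through one endpoint. I will use the standard fact, a consequence of the Pólya/Otter asymptotics and results on limbs of random unlabelled trees, that almost all trees in $\mathcal T_k$ contain any fixed rooted limb. This holds in particular for this gadget with $p$ chosen so that moving the leaf to $p'$ gives a non-automorphic configuration. The translation bijection above applies verbatim inside the host tree, since small families of subtrees meet the gadget in a window avoiding both its attachment point and its free end. Therefore $\sigma_t$ is unchanged when the leaf is moved. It then remains to show that the resulting tree is nonisomorphic for almost all hosts; this follows from asymmetry of almost all trees together with the limb count changing. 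Consequently almost no $T$ is $\sigma_t$-unique, and the ratio tends to $0$.
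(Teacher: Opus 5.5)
Your family $S(a,b,1)$ is the paper's $T_{k,a}$. Splitting on the pendant edge is also how the paper reaches $Z_{T_{k,c}}-Z_{T_{k,c+1}}=z(B_{c+1}A_{k-c-2}-A_{c+1}B_{k-c-2})$. The problem is that the step carrying all the weight, translation invariance in $p$, is not proved. Your reason is that the non-singleton parts of a forest with at most $t-1$ edges lie in a window of length $2t-1$ around $p$, and that is false. Only the part covering $p$ is confined near $p$; a single edge at the far end of the path is an allowed non-singleton part. When $p$ moves, the two segments flanking the $p$-part change length. What you actually need is that the number of ways to place the remaining blocks in two segments of lengths $L$ and $R$ depends only on $L+R$, given enough room. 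That is true, but it is not a translation. The paper gets the cancellation by encoding path edge sets as compositions and replacing $(\alpha,\beta)$ by $(\beta'\alpha'',\alpha'\beta'')$, where $|\alpha'|=|\beta'|$ is the least common positive partial sum. This involution keeps the $z$-degree and the power-sum monomial and moves the $+1$ from one side's first part to the other's. For $l\le r$ it pairs all terms of $z$-degree $<l$ in $B_lA_r$ and $A_lB_r$. Your density argument uses the same locality claim inside a host (``a window avoiding the attachment point''). That already fails for a single edge at the attachment point, so that part is unsupported too.

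Second, the parity gap is a bookkeeping error, not a real obstacle. Your lemma needs distance only $\ge t-1$ from both ends, yet you applied it with legs $\ge t$. Even then, $k-2\ge 2t+1$ does not supply two distinct pairs $\{a,b\}$ with $a,b\ge t$. Used as stated, the lemma compares $S(t-1,k-1-t,1)$ with $S(t,k-2-t,1)$ whenever $t\le k-2-t$. Taking $t=\lfloor (k-2)/2\rfloor$ gives $\tau(k)\ge\lfloor k/2\rfloor$ for every $k\ge4$, including $P_4=S(0,2,1)$ versus $K_{1,3}=S(1,1,1)$. This is exactly the paper's choice $b=\lfloor (k-2)/2\rfloor$, $a=b-1$; as written, your proof leaves even $k$ open.

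For the density statement, the paper uses no structure at all. The $j$-edge layer has at most $p(j)$ coordinates, each in $[0,\binom{k-1}{j}]$. So $\sigma_t$ takes at most $(2k^t)^{P(t)}$ values, where $P(t)=\sum_{j\le t}p(j)$, which is polynomially many in $k$. Trees in singleton fibers inject into these values, while $|\mathcal T_k|$ grows exponentially by Otter's theorem. Your limb-based route could be repaired with a limb theorem and a host version of the involution, but it is far heavier than this pigeonhole argument.
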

Our final theorem gives a lower bound on $\tau(k)$ and asserts that,
for any fixed $t$, the proportion of trees uniquely determined by
$\sigma_t$ tends to zero as the number of vertices grows.

\subsection*{Acknowledgements}
My study of Stanley's chromatic symmetric function problem began in a
directed reading course at the University of Kansas in August 2024 and
developed into an undergraduate honors thesis, completed in May 2025.
The present paper grew out of that work. I am deeply grateful to my
thesis advisor, Dr. Jeremy Martin, for his guidance, encouragement, and
generous support throughout this project. I also thank Dr. Reuven Hodges
for his careful reading of the manuscript and for generously providing
extensive and detailed comments on very short notice.

\section{Background}\label{sec:background}

Throughout we assume familiarity with standard definitions and notations of graph theory; see, e.g., ~\cite{diest2025}. The symbols
$\mathbb{N}^+$ and $\mathbb N$ denote the positive and nonnegative integers,
respectively, and $[k]=\{1,\ldots,k\}$.  All graphs are finite, simple, and
undirected.  For a proposition $P$, we define the function
\[
\mathbf{1}_{P} = 
\begin{cases}
    1 & \text{if } P \text{ is true}\\
    0 & \text{if } P \text{ is false}
\end{cases}
\]

\subsection{Graphs and trees}\label{subsec:graphs}

A graph is written $G=(V,E)$, where $V=V(G)$ is its vertex set and
$E=E(G)$ is its edge set.  The \emph{order} of $G$ is $|V(G)|$.  Two vertices are
\emph{adjacent} if they are the endpoints of a common edge.  The
\emph{degree} $\deg_G(v)$ of $v$ is the number of edges incident to $v$.
A vertex of degree $0$ is \emph{isolated}, and a vertex of degree $1$ is a
\emph{leaf}.  For $v\in V(G)$, the graph $G-v$ is obtained by deleting $v$
and all edges incident to it.
For $F\subseteq E(G)$, let $\Comp_G(F)$ be the set of vertex sets of the
components of $(V(G),F)$. A \emph{star} is a tree having a vertex adjacent to every other vertex.  A \emph{rooted star} specifies one such vertex as its \emph{center}.

A \emph{weighted graph} is a pair $(G,\mathbf w)$ consisting of a graph
$G$ and a weight function $\mathbf w\colon V(G)\to\mathbb{N}^+$.  For
$C\subseteq V(G)$, extend the weight function by
\[
\mathbf w(C)=\sum_{v\in C}\mathbf w(v).
\]
An isomorphism $\varphi\colon(G,\mathbf w)\to(H,\mathbf u)$ of weighted
graphs is a graph isomorphism satisfying
\[
\mathbf u(\varphi(v))=\mathbf w(v)
\]
for every $v\in V(G)$.

\subsection{Partitions and power-sum symmetric functions}
\label{subsec:partitions}

Let $x_1,x_2,\ldots$ be commuting indeterminates.  The ring
$\Lambda_{\Q}$ of symmetric functions is the graded $\Q$-algebra of
bounded-degree formal power series in these variables that are invariant
under every finite permutation of the variables.

For $n>0$, a \emph{partition} of $n$, written $\lambda\vdash n$, is a weakly
decreasing list $\lambda=(\lambda_1,\ldots,\lambda_\ell)$ of positive
integers with sum $n$.  Its number of parts is its \emph{length}, denoted
$\ell(\lambda)$.  The empty partition is the unique partition of $0$ and
has length $0$.  Let
\[
p_r=\sum_{i\ge1}x_i^r,
\qquad
p_\lambda=p_{(\lambda_1, \dots, \lambda_l)}=p_{\lambda_1}\cdots p_{\lambda_\ell}
\]
Over $\Q$, the polynomials $p_1,p_2,\ldots$ are algebraically independent
and
\[
\Lambda_{\Q}=\Q[p_1,p_2,\ldots]
\]
The polynomials $p_\lambda$ form a vector-space basis of $\Lambda_{\Q}$.
Every power-sum monomial is $p_\lambda$ for a unique partition $\lambda$.
Its degree is $|\lambda|$ and its number of power-sum factors is
$\ell(\lambda)$.  For a power-sum monomial $M$, write $[M]f$ for its
coefficient in $f$, and define
\[
[f]_r=\sum_{\ell(\lambda)=r}([p_\lambda]f)p_\lambda.
\]
This is the \emph{$r$-factor layer} of $f$.

\subsection{The chromatic symmetric function and its first derivative}
\label{subsec:csf}

A \emph{coloring} of $G=(V,E)$ is a map
$\kappa\colon V\to\mathbb{N}^+$.  It is \emph{proper} if
$\kappa(u)\ne\kappa(v)$ whenever $uv\in E$.  Let
$\operatorname{PCol}(G)$ denote the set of proper colorings of $G$.  The
\emph{chromatic symmetric function} of $G$ is
\begin{equation}\label{eq:csf-definition}
X_G
=
\sum_{\kappa\in\operatorname{PCol}(G)}
\prod_{v\in V}x_{\kappa(v)}.
\end{equation}
It is a homogeneous symmetric function of degree $|V(G)|$.

For $A\subseteq E(G)$, let $G_A=(V(G),A)$, and let $\lambda_G(A)$ be the
partition formed by the orders of its connected components.  We call
$\lambda_G(A)$ the \emph{type} of $A$.  Stanley's~\cite[Theorem~2.5]{Stanley1995} power-sum
expansion is
\begin{equation}\label{eq:stanley}
X_G=\sum_{A\subseteq E(G)}(-1)^{|A|}p_{\lambda_G(A)}.
\end{equation}
Since $p_1,p_2,\ldots$ are algebraically independent, this is the unique
expression for $X_G$ as a polynomial in the power sums.  Hence the
\emph{first power-sum derivative}
\[
\frac{\partial X_G}{\partial p_1}
\]
is unambiguously defined.  Let $\pi$ be the ring endomorphism defined by
$\pi(p_1)=0$ and $\pi(p_r)=p_r$ for $r\ge2$. Then we define
\begin{equation}\label{eq:phi-definition}
\Phi_G = \pi \left(\frac{\partial X_G}{\partial p_1} \right)
=\left.\frac{\partial X_G}{\partial p_1}\right|_{p_1=0}.
\end{equation}

Stanley~\cite[Corollary~2.12(a)]{Stanley1995} proved that for every
graph $G$,
\begin{equation}\label{eq:stanley-derivative}
\frac{\partial X_G}{\partial p_1}=\sum_{v\in V(G)}X_{G-v}.
\end{equation}

\begin{lemma}\label{lem:pi}
For every graph $G$,
\[
\pi X_G=
\sum_{\substack{A\subseteq E(G)\\
                 (V(G),A)\text{ has no isolated vertex}}}
(-1)^{|A|}p_{\lambda_G(A)}.
\]
\end{lemma}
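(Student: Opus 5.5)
We apply the ring endomorphism $\pi$ term by term to Stanley's power-sum expansion \eqref{eq:stanley}. Since $\pi$ is a $\Q$-algebra map with $\pi(p_1)=0$ and $\pi(p_r)=p_r$ for $r\ge2$, we get $\pi(p_\lambda)=\prod_i\pi(p_{\lambda_i})$. This product is $0$ if some part $\lambda_i$ equals $1$, and equals $p_\lambda$ otherwise. So $\pi$ acts on the power-sum basis by killing exactly those $p_\lambda$ for which $\lambda$ has a part of size $1$, and fixing the rest.

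Next we translate this condition into graph language. For $A\subseteq E(G)$, the parts of $\lambda_G(A)$ are the orders of the connected components of $G_A=(V(G),A)$. A part equal to $1$ is exactly a component consisting of a single vertex with no incident edge of $A$. Such a vertex is an isolated vertex of $(V(G),A)$. Hence $\pi(p_{\lambda_G(A)})=p_{\lambda_G(A)}$ when $(V(G),A)$ has no isolated vertex, and $\pi(p_{\lambda_G(A)})=0$ otherwise.

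Combining the two steps with linearity of $\pi$ gives
\[
\pi X_G=\sum_{A\subseteq E(G)}(-1)^{|A|}\pi\bigl(p_{\lambda_G(A)}\bigr)
=\sum_{\substack{A\subseteq E(G)\\ (V(G),A)\text{ has no isolated vertex}}}(-1)^{|A|}p_{\lambda_G(A)},
\]
which is the claimed formula.

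There is no real obstacle. The only point needing care is that $\pi$ is well defined on $\Lambda_{\Q}$, which requires the $p_r$ to be algebraically independent, as recalled in the Background. Given that, applying $\pi$ to the expansion \eqref{eq:stanley} is legitimate.
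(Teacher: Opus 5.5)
Your proposal is correct and follows the paper's proof: apply $\pi$ termwise to Stanley's expansion \eqref{eq:stanley}, note that $\pi p_{\lambda_G(A)}$ is $p_{\lambda_G(A)}$ or $0$ according to whether $\lambda_G(A)$ has a part equal to $1$, and identify such parts with isolated vertices of $(V(G),A)$. Your added remark that $\pi$ is well defined because $p_1,p_2,\ldots$ are algebraically independent is accurate; the paper leaves it implicit.
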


\begin{proof}
Since $\pi$ is a ring homomorphism, applying it to \eqref{eq:stanley}
gives
\[
\pi X_G=\sum_{A\subseteq E(G)}(-1)^{|A|}\,\pi p_{\lambda_G(A)}.
\]
For $A\subseteq E(G)$, the monomial $p_{\lambda_G(A)}$ is the product of
$p_{|C|}$ over the components $C$ of $G_A$.  Since $\pi$ sends $p_1$ to
$0$ and fixes $p_r$ for $r\ge2$, this product is unchanged by $\pi$ when
every component has at least two vertices, and is sent to $0$ when some
component consists of a single vertex, that is, when $G_A$ has an
isolated vertex.  Hence exactly the edge sets $A$ for which $G_A$ has no
isolated vertex remain in the sum.
\end{proof}

\subsection{Proper trees and weighted skeletons}
\label{subsec:skeletons}

An edge of a tree is \emph{internal} if both endpoints are non-leaves.  Let
$T$ be a proper tree of order $n$, and let $c_1,\ldots,c_k$ be its non-leaf
vertices.  Delete every internal edge.  For each $i$, regard the component
containing $c_i$ as a star rooted at $c_i$, and let $w_i$ be its order.
These rooted stars are the \emph{leaf components} of $T$.

The \emph{weighted skeleton} of $T$ is the weighted tree $(S,\mathbf w)$
with vertex set $[k]$, weight $\mathbf w(i)=w_i$, and edge $ij$ whenever
$c_ic_j$ is an internal edge of $T$.  Thus
\[
n=\sum_{i=1}^k w_i.
\]
Conversely, $T$ is recovered from $(S,\mathbf w)$ by retaining the edges of
$S$ and attaching $\mathbf w(i)-1$ new leaves to each vertex $i$.

For $F\subseteq E(S)$, let $\Comp(F)$ be the set of vertex sets of the
components of the spanning forest $([k],F)$.  For $C\in\Comp(F)$, its order
is $|C|$ and its total weight is
\[
\mathbf w(C)=\sum_{i\in C}w_i.
\]
The weighted skeleton is \emph{multiplicity-isolated} if every weight that
occurs at more than one vertex occurs only at leaves of $S$.
In Figure~\ref{fig:weighted-skeleton}, the repeated weight $2$ occurs only at the two leaves of $S$.
\begin{figure}[!htb]
\centering
\begin{tikzpicture}[
    scale=0.8,
  transform shape,
  center/.style={circle,fill=black,inner sep=2.2pt},
  leaf/.style={circle,draw,fill=white,inner sep=2.2pt},
  weight/.style={circle,draw,minimum size=7mm,inner sep=0pt}
]
\begin{scope}
\node[center,label=below:$c_1$] (c1) at (0,0) {};
\node[center,label=below:$c_2$] (c2) at (1.8,0) {};
\node[center,label=below:$c_3$] (c3) at (3.6,0) {};
\node[leaf] (a1) at (0,0.9) {};
\node[leaf] (b1) at (1.35,0.9) {};
\node[leaf] (b2) at (2.25,0.9) {};
\node[leaf] (d1) at (3.6,0.9) {};
\draw (c1)--(c2)--(c3);
\draw (c1)--(a1) (c2)--(b1) (c2)--(b2) (c3)--(d1);
\draw[dashed,rounded corners] (-0.38,-0.42) rectangle (0.38,1.25);
\draw[dashed,rounded corners] (1.02,-0.42) rectangle (2.58,1.25);
\draw[dashed,rounded corners] (3.22,-0.42) rectangle (3.98,1.25);
\node at (1.8,-1.05) {$T$};
\end{scope}
\begin{scope}[xshift=6cm]
\node[weight,label=below:$1$] (s1) at (0,0) {$2$};
\node[weight,label=below:$2$] (s2) at (1.4,0) {$3$};
\node[weight,label=below:$3$] (s3) at (2.8,0) {$2$};
\draw (s1)--(s2)--(s3);
\node at (1.4,-1.05) {$(S,\mathbf w)$};
\end{scope}
\end{tikzpicture}
\captionsetup{font=footnotesize}
\caption{A proper tree and its weighted skeleton.}
\label{fig:weighted-skeleton}
\end{figure}

\FloatBarrier

\section{The First-Derivative Invariant}\label{sec:first-derivative}
In this section we prove Theorem~\ref{thm:palm}.

Fix a proper tree $T$ with weighted skeleton $(S,\mathbf w)$.
\begin{proposition}[The Master Formula]\label{prop:master}
Let $\varepsilon=(-1)^{n-k-1}$.  Then
\begin{equation}\label{eq:master}
\Phi_T
=
\varepsilon\,\pi\left(
\sum_{F\subseteq E(S)}(-1)^{|F|}
\sum_{C_0\in\Comp(F)}
(\mathbf w(C_0)-|C_0|)p_{\mathbf w(C_0)-1}
\prod_{C\in\Comp(F)\setminus\{C_0\}}p_{\mathbf w(C)}
\right).
\end{equation}
In particular, every nonzero summand after specialization has degree $n-1$
and $k-|F|$ power-sum factors.
\end{proposition}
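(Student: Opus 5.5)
The plan is to work directly from Stanley's power-sum expansion \eqref{eq:stanley} instead of going through \eqref{eq:stanley-derivative}. Regarding $p_{\lambda}$ as a monomial in the algebraically independent $p_1,p_2,\ldots$, we have $\partial p_\lambda/\partial p_1=m_1(\lambda)\,p_{\lambda\setminus 1}$. Here $m_1(\lambda)$ is the number of parts equal to $1$, and $\lambda\setminus1$ is $\lambda$ with one part $1$ removed. After applying $\pi$, this term survives only when $m_1(\lambda)=1$. So the first step is to record
\[
\Phi_T=\sum_{\substack{A\subseteq E(T)\\ (V(T),A)\text{ has exactly one isolated vertex}}}(-1)^{|A|}\,\pi\bigl(p_{\lambda_T(A)\setminus 1}\bigr).
\]
This is the analogue of Lemma~\ref{lem:pi} one derivative up, and it has the same proof. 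The outer $\pi$ is kept because $p_{\lambda\setminus1}$ may itself still contain factors $p_1$, and those must be killed.

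Next, split each $A$ as $A=F\sqcup L$, where $F\subseteq E(S)$ consists of internal edges (identified with skeleton edges) and $L$ consists of leaf edges. I will classify when $(V(T),A)$ has exactly one isolated vertex $u$.

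\emph{Case 1: $u$ is a non-leaf $c_i$.} Then no leaf edge at $c_i$ lies in $L$. Since $T$ is proper, $c_i$ has at least one leaf, and that leaf is then also isolated. This gives a second isolated vertex, so this case contributes nothing.

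\emph{Case 2: $u$ is a leaf $\ell$ attached to some $c_i$.} Every other leaf has its unique edge in $L$, and the edge $c_i\ell$ is not in $L$. Hence $L$ is all leaf edges except $c_i\ell$, and $F$ is arbitrary.

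Sets $A$ in which every leaf edge is present have no isolated vertex. They are killed by the derivative, since $m_1=0$. So the surviving terms are indexed exactly by pairs $(F,\ell)$, with $F\subseteq E(S)$ and $\ell$ a leaf of $T$.

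For such a pair, the components of $(V(T),A)$ other than $\{\ell\}$ are the sets $C\in\Comp(F)$, each blown up by its leaves. The component containing $c_i$ has order $\mathbf w(C_0)-1$, where $C_0\ni i$, and every other component $C$ has order $\mathbf w(C)$. Thus $p_{\lambda\setminus1}=p_{\mathbf w(C_0)-1}\prod_{C\neq C_0}p_{\mathbf w(C)}$. Grouping by $C_0$, the number of leaves $\ell$ attached to vertices of $C_0$ is $\sum_{i\in C_0}(w_i-1)=\mathbf w(C_0)-|C_0|$, which produces the coefficient in \eqref{eq:master}.

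For the sign, $T$ has $n-k$ leaf edges, so $|A|=|F|+n-k-1$ and $(-1)^{|A|}=\varepsilon(-1)^{|F|}$. Since $\pi$ is a ring map, it can be pulled outside the sum, giving \eqref{eq:master}. The final remark follows by counting. The degree is $n-1$. The number of factors is $|\Comp(F)|=k-|F|$, because $([k],F)$ is a forest. Every $p_{\mathbf w(C)}$ has $\mathbf w(C)\ge2$, so $\pi$ acts only on $p_{\mathbf w(C_0)-1}$; it kills the term exactly when $\mathbf w(C_0)=2$, and otherwise the term keeps its degree and factor count.

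The main obstacle is the case analysis for $u$: in particular, using properness ($w_i\ge2$) to rule out an isolated center. I will also double-check the bookkeeping of orders when $C_0$ loses its leaf $\ell$.
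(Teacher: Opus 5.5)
Your proof is correct and is essentially the paper's argument. An edge set $A\subseteq E(T)$ with exactly one isolated vertex $u$ is the same thing as an admissible edge set of $T-u$, so differentiating Stanley's expansion directly just re-derives \eqref{eq:stanley-derivative} combined with Lemma~\ref{lem:pi}; after that, your case analysis matches the paper's step for step (properness excludes isolated centers, the other leaf edges are forced, components are blown up from $\Comp(F)$, and terms are grouped by $C_0$). One point to tighten: in Case 2 the set $F$ is not fully arbitrary, because when $w_i=2$ and $C_0=\{i\}$ the center $c_i$ is isolated too; but these are exactly the pairs whose monomial contains $p_{\mathbf w(C_0)-1}=p_1$, so summing over all pairs $(F,\ell)$ inside $\pi$ is still correct, as your closing remark and the paper both note.
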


\begin{proof}
By Stanley's formula \eqref{eq:stanley-derivative} with $G=T$, we have
\[
\frac{\partial X_T}{\partial p_1}=\sum_{v\in V(T)}X_{T-v}.
\]
By Lemma~\ref{lem:pi}, for every graph $G$ we get
\[
\pi X_G=
\sum_{\substack{A\subseteq E(G)\\
                 (V(G),A)\text{ has no isolated vertex}}}
(-1)^{|A|}p_{\lambda_G(A)},
\]
because $\pi$ sends $p_1$ to $0$ and fixes $p_r$ for $r\ge2$, so that
$\pi p_{\lambda_G(A)}$ equals $p_{\lambda_G(A)}$ when no part of
$\lambda_G(A)$ is $1$, that is, when $(V(G),A)$ has no isolated vertex,
and equals $0$ otherwise.  Applying the linear map $\pi$ to the first
identity~(\ref{eq:stanley-derivative}), and then using Lemma~\ref{lem:pi} with $G=T-v$ for each $v$, gives
\begin{equation}\label{eq:phi-leaf-sum}
\Phi_T=\sum_{v\in V(T)}\pi X_{T-v}
=\sum_{v\in V(T)}\
\sum_{\substack{A\subseteq E(T-v)\\
                 (V(T-v),A)\text{ has no isolated vertex}}}
(-1)^{|A|}p_{\lambda_{T-v}(A)}.
\end{equation}
Call an edge set $A\subseteq E(T-v)$ \emph{admissible} if $(V(T-v),A)$
has no isolated vertex.  Thus $\Phi_T$ is the sum of the signed monomials
$(-1)^{|A|}p_{\lambda_{T-v}(A)}$ over all vertices $v$ of $T$ and all
admissible $A\subseteq E(T-v)$. We must determine these pairs $(v,A)$.

First let $v=c_i$ be a non-leaf vertex of $T$.  Since $T$ is proper,
$c_i$ is adjacent to a leaf $\ell$ of $T$.  The only edge of $T$ that has $\ell$ as its endpoint is $c_i\ell$, which is removed together with $c_i$, so $\ell$ has
degree $0$ in $T-v$.  Hence $\ell$ is an isolated vertex of
$(V(T-v),A)$ for every $A\subseteq E(T-v)$, no edge set is admissible,
and $\pi X_{T-v}=0$.  Only the leaves $v$ of $T$ remain in
\eqref{eq:phi-leaf-sum}.

The edges of $T$ are the $k-1$ internal edges, which are the edges
$c_ic_j$ with $ij\in E(S)$, and the leaf edges, of which $c_j$ has
$w_j-1$.  The number of leaf edges is therefore
\[
\sum_{j=1}^k(w_j-1)=n-k.
\]
Now let $v$ be a leaf of $T$, adjacent to the non-leaf vertex $c_i$.
Let $L_0$ be the set of leaf edges of $T$ other than $c_iv$, so that
$|L_0|=n-k-1$.  The graph $T-v$ contains these edges and every internal
edge of $T$.  Identifying the latter edges with $E(S)$, every
$A\subseteq E(T-v)$ decomposes uniquely as $A=L\cup F$, where
$L\subseteq L_0$ and $F\subseteq E(S)$.  This classification refers to
edges of the original tree $T$, since an internal edge of $T$ can become
a leaf edge in $T-v$.

Let $A=L\cup F$ be admissible.  If an edge $c_j\ell\in L_0$ were
missing from $L$, then its original leaf endpoint $\ell$, whose only
edge in $T-v$ is $c_j\ell$, would be isolated in $(V(T-v),A)$.
Hence $L=L_0$, and
\[
(-1)^{|A|}=(-1)^{|L|}(-1)^{|F|}=(-1)^{n-k-1}(-1)^{|F|}
=\varepsilon(-1)^{|F|}.
\]
This is the origin of the sign $\varepsilon$.  Next we determine the
partition $\lambda_{T-v}(A)$.  For $C\in\Comp(F)$, the vertices $c_j$
with $j\in C$ are connected to one another by the internal edges in $F$,
and each of them is joined by the edges in $L$ to all of its surviving
leaves from $T$.  Since $L=L_0$, and every vertex of $T-v$ is either
some $c_j$ or a leaf of $T$ other than $v$, the components of
$(V(T-v),A)$ are exactly the sets
\[
\{c_j\mid j\in C\}\cup
\{\ell\in V(T)\setminus\{v\}\mid \ell\text{ is a leaf of $T$ adjacent to some }c_j,\ j\in C\},
\qquad C\in\Comp(F).
\]
Let $C_i$ be the component of $([k],F)$ containing $i$.  The component
of $(V(T-v),A)$ corresponding to $C$ has order
$\sum_{j\in C}w_j=\mathbf w(C)$ when $C\ne C_i$, and order
$\mathbf w(C_i)-1$ when $C=C_i$, because the deleted leaf $v$ was one of
the $w_i-1$ leaves at $c_i$.  Therefore
\begin{equation}\label{eq:leaf-monomial}
(-1)^{|A|}p_{\lambda_{T-v}(A)}
=\varepsilon(-1)^{|F|}
p_{\mathbf w(C_i)-1}
\prod_{C\in\Comp(F)\setminus\{C_i\}}p_{\mathbf w(C)}.
\end{equation}

Conversely, let $F\subseteq E(S)$ be arbitrary and let $L=L_0$.
Every surviving leaf of $T$ is covered by its edge in $L$, and every
$c_j$ with $j\ne i$ is covered by one of its $w_j-1\ge1$
leaf edges in $L$.  The vertex $c_i$ is covered by a leaf edge in $L$ if
$w_i\ge3$, and by an internal edge in $F$ if $C_i\ne\{i\}$.  So
$A=L\cup F$ fails to be admissible only when $w_i=2$ and
$C_i=\{i\}$, and in exactly this case $\mathbf w(C_i)-1=1$, so that the
monomial in \eqref{eq:leaf-monomial} contains the factor $p_1$.  Every
other index in that monomial is at least $2$, since $w_j\ge2$ for all
$j$.  Hence applying $\pi$ to the right side of \eqref{eq:leaf-monomial}
leaves the admissible terms unchanged and removes the terms that are not
admissible.  Summing over all $F\subseteq E(S)$ gives
\[
\pi X_{T-v}
=\varepsilon\,\pi\left(
\sum_{F\subseteq E(S)}(-1)^{|F|}
p_{\mathbf w(C_i)-1}
\prod_{C\in\Comp(F)\setminus\{C_i\}}p_{\mathbf w(C)}
\right).
\]
This expression depends on $v$ only through the index $i$ of its
neighbour $c_i$, and there are $w_i-1$ leaves adjacent to $c_i$.
Summing over all leaves $v$ of $T$ in \eqref{eq:phi-leaf-sum} therefore
gives
\begin{align*}
\Phi_T
&=\varepsilon\,\pi\left(
\sum_{F\subseteq E(S)}(-1)^{|F|}
\sum_{i=1}^k(w_i-1)p_{\mathbf w(C_i)-1}
\prod_{C\in\Comp(F)\setminus\{C_i\}}p_{\mathbf w(C)}
\right)\\
&=\varepsilon\,\pi\left(
\sum_{F\subseteq E(S)}(-1)^{|F|}
\sum_{C_0\in\Comp(F)}
(\mathbf w(C_0)-|C_0|)p_{\mathbf w(C_0)-1}
\prod_{C\in\Comp(F)\setminus\{C_0\}}p_{\mathbf w(C)}
\right),
\end{align*}
where the second equality groups the indices $i$ according to the
component $C_0=C_i$ that contains them and uses
\[
\sum_{i\in C_0}(w_i-1)=\mathbf w(C_0)-|C_0|.
\]
This is \eqref{eq:master}.  The forest $([k],F)$ has $k-|F|$
components, so the monomial in the summand for $F$ and $C_0$ has one
power-sum factor for each component, that is, $k-|F|$ factors, and its
degree is
\[
(\mathbf w(C_0)-1)+\sum_{C\in\Comp(F)\setminus\{C_0\}}\mathbf w(C)=n-1.
\]
\end{proof}

For $a\ge2$,
define the \emph{weight multiplicity} $m_a$ by
\[
m_a=\bigl|\{v\in V(S)\mid\mathbf w(v)=a\}\bigr|.
\]
For $2\le a\le b$, define the \emph{edge-type count} $e_{ab}$ by
\[
e_{ab}=\bigl|\{uv\in E(S)\mid
\{\mathbf w(u),\mathbf w(v)\}=\{a,b\}\}\bigr|.
\]
An unordered pair $\{a,b\}$ is \emph{feasible} if $m_a,m_b>0$ when
$a\ne b$, and if $m_a\ge2$ when $a=b$.  Thus the feasible pairs are
exactly the possible endpoint-weight pairs.

The monomial
\[
P_T=\prod_{a\ge2}p_a^{m_a}
\]
records the multiset of skeleton weights.  Define the differential operator
\[
\Delta=\sum_{a\ge3}(a-1)p_{a-1}\frac{\partial}{\partial p_a}.
\]
We order monomials lexicographically with $p_2>p_3>\cdots$.  Let
$\operatorname{LT}(f)$ denote the leading term, including its coefficient,
and set $\operatorname{LT}(0)=0$.  For a monomial $Q$ containing some
$p_a$ with $a\ge3$, define
\[
s(Q)=\min\{a\ge3\mid p_a\text{ divides }Q\}.
\]

\begin{lemma}\label{lem:recoverP}
Let $Q\in\Q[p_2,p_3,\ldots]$ be a power-sum monomial.  If $\Delta Q=0$,
then $Q=p_2^d$ for some $d\ge0$.  Otherwise,
$\operatorname{LT}(\Delta Q)$ uniquely determines $Q$.
\end{lemma}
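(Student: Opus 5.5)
Write $Q=\prod_{a\ge2}p_a^{q_a}$.  Applying $\Delta$ termwise gives
\[
\Delta Q=\sum_{a\ge3}(a-1)\,q_a\,\frac{p_{a-1}}{p_a}\,Q .
\]
The sum runs over the indices $a\ge3$ with $q_a>0$.  Each summand is a distinct monomial with a positive coefficient, because different $a$ give different exponent vectors.  So no cancellation occurs.

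The first assertion follows at once.  If $\Delta Q=0$, then $q_a=0$ for every $a\ge3$, and hence $Q=p_2^{q_2}$.

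For the second assertion, assume some $q_a>0$ with $a\ge3$, and let $s=s(Q)$.  I will identify the leading term among the monomials $Q_a=Q\,p_{a-1}/p_a$.

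Compare $Q_a$ with $Q_b$ for $3\le a<b$, reading exponents in the order $p_2,p_3,\dots$.
\begin{itemize}
\item Passing from $Q$ to $Q_a$ raises the exponent of $p_{a-1}$ by one and lowers that of $p_a$ by one.
\item Passing from $Q$ to $Q_b$ changes only the exponents of $p_{b-1}$ and $p_b$, and $b-1\ge a$.
\item Hence the first position at which $Q_a$ and $Q_b$ differ is $p_{a-1}$, where $Q_a$ has the larger exponent.
\end{itemize}
If $b-1=a$, the exponents agree before position $a-1$, and at $a-1$ the monomial $Q_a$ is still larger.  With $p_2>p_3>\cdots$ lexicographically, this means $Q_a>Q_b$.

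Therefore the leading monomial is $Q_s$, and
\[
\operatorname{LT}(\Delta Q)=(s-1)\,q_s\,Q\,p_{s-1}/p_s .
\]

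It remains to recover $Q$ from this term $cM$.  From $M$ alone we can read off $s$, which is the main subtlety.  The candidates for $s$ are indices $a$ with $p_{a-1}\mid M$, and several such $a$ may exist.  The tool is minimality: every $p_j$ with $3\le j<s$ has exponent $0$ in $Q$, and so has exponent $0$ in $M$ unless $j=s-1$.

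\emph{Case $s=3$.}  Then $M$ has exponent $q_2+1\ge1$ at $p_2$.

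\emph{Case $s>3$.}  Then $M$ has exponent $1$ at $p_{s-1}$ and exponent $0$ at every $p_j$ with $3\le j<s-1$.

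In both cases $s-1$ is recovered as follows.
\begin{itemize}
\item If $M$ involves some $p_j$ with $j\ge3$, let $j_0$ be the least such $j$.  We have $j_0\le s-1$ when $s>3$, and $j_0\ge s$ when $s=3$.
\item To resolve this ambiguity, use the coefficient: $c=(s-1)q_s$, and $q_s$ equals the exponent of $p_s$ in $M$ plus one.
\item Test each candidate $s'\in\{3,\,j_0+1\}$.  Check that $p_{s'-1}$ divides $M$ consistently with the structure above, and that $c=(s'-1)(e_{s'}+1)$, where $e_{s'}$ is the exponent of $p_{s'}$ in $M$.
\item If $M$ involves no $p_j$ with $j\ge3$, then $s=3$ and $q_3=1$.
\end{itemize}

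The obstacle I expect is ruling out that both candidates pass this test.  I would handle it by noting that for $s'=j_0+1>3$ the monomial $M$ must have exponent exactly $1$ at $p_{j_0}$.  If $s=3$, we instead have $j_0\ge3$ with $p_{j_0}$ present in the original $Q$.  If the coefficient check does not separate these two cases, I would fall back to comparing degrees: the degree of $Q$ equals the degree of $M$ plus one, which is fixed.  Failing that, I would revisit whether ``uniquely determines'' must also use $c$, and adjust the claim if a genuine collision arises.  Once $s$ is known, $Q=M\,p_s/p_{s-1}$.
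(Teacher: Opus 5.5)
\paragraph{Verdict: genuine gap at the decisive step.} Your computation of $\Delta Q$, the first assertion, and the formula $\operatorname{LT}(\Delta Q)=(s-1)q_s\,Q\,p_{s-1}/p_s$ are correct and agree with the paper. You also justify the leading-term claim, which the paper only asserts. Your reduction to the two candidates $s'\in\{3,\,j_0+1\}$ is right as well. It is the same dichotomy the paper reaches: a collision with $s(Q)<s(Q')=t$ forces $s(Q)=3$, $Q=Ap_3p_{t-1}$, and $Q'=Ap_2p_t$.

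The step you label as the obstacle, showing that the two candidates cannot both pass, is the whole content of the lemma, and you leave it open. Neither fallback closes it. Comparing degrees is vacuous, since $\deg Q=\deg M+1$ for every candidate. Noting that $p_{j_0}$ already occurs in $Q$ when $s=3$ does not exclude $e_{j_0}=1$. In fact the monomial alone does not determine $Q$: $\Delta(p_3p_4)=2p_2p_4+3p_3^2$ and $\Delta(p_2p_5)=4p_2p_4$ have the same leading monomial $p_2p_4$. So the coefficient is indispensable. No adjustment of the claim is needed, because $\operatorname{LT}$ is defined to include the coefficient. But you must actually show that the coefficient separates the candidates.

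\paragraph{The missing computation.} Write $e_j$ for the exponent of $p_j$ in $M$, as in your test. Suppose both candidates passed. Then $e_{j_0}=1$ and $c=2(e_3+1)=j_0(e_{j_0+1}+1)$.
\begin{itemize}
\item If $j_0=3$, then $e_3=1$, so $4=3(e_4+1)$, which is impossible.
\item If $j_0\ge4$, then $e_3=0$, so $2=j_0(e_{j_0+1}+1)\ge4$, which is again impossible.
\end{itemize}
The true value of $s$ always passes its own test. Hence $s$, and therefore $Q=M\,p_s/p_{s-1}$, is determined by $cM$. This is exactly the paper's comparison of the leading coefficients: $4$ versus $3(u_4(A)+1)$ when $t=4$, and $2$ versus $(t-1)(u_t(A)+1)\ge4$ when $t\ge5$. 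With this step added, your argument is complete.
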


\begin{proof}
For any monomial $B$, let $u_a(B)$ be the exponent of $p_a$ in $B$.
If $\Delta Q\ne0$, then
\[
\operatorname{LT}(\Delta Q)
=(s(Q)-1)u_{s(Q)}(Q)
Q\frac{p_{s(Q)-1}}{p_{s(Q)}}.
\]
Suppose $\operatorname{LT}(\Delta Q)=\operatorname{LT}(\Delta Q')\ne0$,
and let $s=s(Q)$ and $t=s(Q')$.  Assume without loss of generality that
$s\le t$.  If $s=t$, equality of the leading monomials gives $Q=Q'$.
If $s<t$, comparison of exponents first gives $s=3$ and then
\[
Q=A p_3p_{t-1},
\qquad
Q'=A p_2p_t,
\]
where $A$ contains none of $p_3,\ldots,p_{t-1}$.  For $t=4$, the two
leading coefficients are $4$ and $3(u_4(A)+1)$.  They are unequal.  For
$t\ge5$, they are $2$ and $(t-1)(u_t(A)+1)$, which is at least $4$.
They are again unequal.  Thus the leading term determines $Q$.  Finally,
$\Delta Q=0$ exactly when no $p_a$ with $a\ge3$ occurs.
\end{proof}

For a feasible pair $\{a,b\}$, define
\begin{equation}\label{eq:theta}
P_{T,ab}=\frac{P_T}{p_ap_b},
\qquad
\Theta_{ab}
=
(a+b-2)p_{a+b-1}P_{T,ab}
+p_{a+b}\Delta P_{T,ab}.
\end{equation}

\begin{lemma}\label{lem:theta}
The polynomials $\Theta_{ab}$, indexed by feasible unordered pairs, are
linearly independent over $\Q$.
\end{lemma}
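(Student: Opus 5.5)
The plan is a triangularity argument. Each $\Theta_{ab}$ has a distinguished monomial, and that monomial cannot occur in any $\Theta_{a'b'}$ with $a'+b'\ge a+b$ and $\{a',b'\}\neq\{a,b\}$. Taking the pair of smallest $a+b$ in a putative relation then forces its coefficient to vanish.

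\emph{Step 1: the monomials of $\Theta_{ab}$.} Write $s=a+b$ and regard power-sum monomials as multisets of indices. The distinguished monomial is $Q_{ab}=p_{s-1}P_{T,ab}$, whose multiset is $P_T-\{a,b\}+\{s-1\}$. The remaining monomials of $\Theta_{ab}$ come from $p_s\Delta P_{T,ab}$. They have the form
\[
p_sp_{c-1}\frac{P_{T,ab}}{p_c},\qquad c\ge3,
\]
with multiset $P_T-\{a,b,c\}+\{s,c-1\}$, and each carries a positive coefficient $(c-1)u_c(P_{T,ab})$. The multiplicity of $Q_{ab}$ in $\Theta_{ab}$ is therefore at least $s-2>0$, possibly plus further positive contributions, so it is nonzero. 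Throughout we use the cancellation rule for multisets: $P_T-X+Y=P_T-X'+Y'$ if and only if $X+Y'=X'+Y$.

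\emph{Step 2: $Q_{ab}$ does not occur in $\Theta_{a'b'}$ when $s'=a'+b'\ge s$ and $\{a',b'\}\ne\{a,b\}$.}
\begin{itemize}
\item Suppose $Q_{ab}=Q_{a'b'}$. Then $\{a,b,s'-1\}=\{a',b',s-1\}$, so equating sums gives $s=s'$. Since $s-1>\max(a',b')$ (all indices are at least $2$), the element $s-1$ on the right must match $s'-1$ on the left. Hence $\{a,b\}=\{a',b'\}$.
\item Suppose $Q_{ab}$ equals a $\Delta$-monomial of $\Theta_{a'b'}$. Then $\{a',b',c,s-1\}=\{a,b,s',c-1\}$. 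Now $s'$ exceeds $a'$ and $b'$, and $s'\ne s-1$ because $s'\ge s$. So $s'=c$, and cancelling leaves $\{a',b',s-1\}=\{a,b,s'-1\}$. Again $s-1>a,b$, so $s-1=s'-1$, and then $\{a',b'\}=\{a,b\}$.
\end{itemize}

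\emph{Step 3: conclude.} Suppose $\sum\lambda_{ab}\Theta_{ab}=0$ with some $\lambda_{ab}\neq0$. Among the pairs with $\lambda_{ab}\ne0$, choose one minimising $s=a+b$. By Step 2, the coefficient of $Q_{ab}$ in the sum is $\lambda_{ab}$ times its nonzero multiplicity in $\Theta_{ab}$ from Step 1. This is nonzero, a contradiction.

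Feasibility is used only to ensure that $P_{T,ab}$ is a genuine monomial, so that $\Theta_{ab}$ is well defined. The main obstacle is the multiset bookkeeping in Step 2. The key observations are that $s-1$ strictly exceeds both $a$ and $b$, since every weight is at least $2$, and that choosing the minimal $s$, rather than the maximal one, is what rules out the coincidence $s'=s-1$.
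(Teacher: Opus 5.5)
Your argument is essentially the paper's. Your $Q_{ab}$ is $P_TM_{ab}$, and the paper works with the Laurent monomials $\Theta_{ab}/P_T$. It shows that $M_{ab}$ can come from a different pair only when that pair has sum $a+b-1$, then takes a nonzero coefficient with $a+b$ minimal. Your positivity observation in Step 1 lets you avoid computing the paper's exact diagonal coefficient $(a+b-2)+(a+b-1)m_{a+b}$. The paper reuses that value later for the triangular recovery system, but this lemma does not need it.

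One step needs repair. In the first bullet of Step 2, equating sums gives no information, because both $\{a,b,s'-1\}$ and $\{a',b',s-1\}$ sum to $s+s'-1$ identically. Argue instead from $s-1>\max(a,b)$. The element $s-1$ of the right-hand multiset can then only be $s'-1$ on the left, so $s=s'$, and cancelling gives $\{a,b\}=\{a',b'\}$. The inequality you cite, $s-1>\max(a',b')$, is not the one that does this work. The correct reasoning is the same as in your second bullet, which is right as written. It does not use $s'\ge s$, so it shows the $Q_{ab}$ are pairwise distinct, matching the paper's remark that the $M_{ab}$ are distinct. With this fix the proof is complete.
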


\begin{proof}
Let
\[
M_{ab}=\frac{p_{a+b-1}}{p_ap_b},
\qquad
R_{ab,r}=\frac{p_{a+b}p_{r-1}}{p_ap_bp_r}.
\]
Dividing by $P_T$ gives
\[
\frac{\Theta_{ab}}{P_T}
=(a+b-2)M_{ab}
+\sum_{r\ge3}(r-1)
\bigl(m_r-\one{r=a}-\one{r=b}\bigr)R_{ab,r}.
\]
The Laurent monomials $M_{ab}$ are distinct.  Each $M_{cd}$ has one
power-sum factor in its numerator and two in its denominator, while
$R_{ab,r}$ initially has two numerator factors and three denominator
factors.  Thus $R_{ab,r}$ can equal some $M_{cd}$ only if one numerator
factor cancels one denominator factor.  The possible cancellations are
\[
\begin{aligned}
p_{a+b}=p_r &\quad\Longleftrightarrow\quad r=a+b,
& R_{ab,r}&=M_{ab},\\
p_{r-1}=p_a &\quad\Longleftrightarrow\quad r=a+1,
& R_{ab,r}&=M_{a+1,b},\\
p_{r-1}=p_b &\quad\Longleftrightarrow\quad r=b+1,
& R_{ab,r}&=M_{a,b+1}.
\end{aligned}
\]
Whenever one of these terms has nonzero coefficient, the pair on the right
is feasible because the required factors occur in $P_T$.  Hence a term
$M_{ab}$ contributed by a different polynomial $\Theta_{cd}$ must satisfy
\[
c+d=a+b-1.
\]
Suppose
\[
\sum_{a\le b}c_{ab}\Theta_{ab}=0,
\]
and choose $c_{ab}\ne0$ with $a+b$ minimal.  No different pair can
contribute to $M_{ab}$.  Its coefficient in the divided relation is
\[
\bigl((a+b-2)+(a+b-1)m_{a+b}\bigr)c_{ab},
\]
which is nonzero.  This is a contradiction.
\end{proof}

Let
\[
U=|\{a\ge2\mid m_a>0\}|
\]
be the number of distinct skeleton weights, and let $h$ be the number of
feasible pairs.  We now show that $\Phi_T$ determines the weight
multiplicities and the edge-type counts.  The proof is a recovery
procedure carried out in stages, and we record here what it uses.  When
$h\ge1$, it uses the degree of $\Phi_T$, the coefficient
$[p_{n-1}]\Phi_T$, the leading term of $[\Phi_T]_k$, and $h-1$
coefficients of $[\Phi_T]_{k-1}$ chosen after $P_T$ is known.  When
$h=0$, the degree and $[p_{n-1}]\Phi_T$ suffice.  In addition to the
degree and the monomial position of the leading term when
$[\Phi_T]_k\ne0$, the number of selected coefficient values is at most
\[
1+\binom{U+1}{2}.
\]
Locating the leading term or certifying that the layer is zero may require
inspecting additional coefficients.  Once $P_T$ is known, sparse
representations of the required indices and the resulting triangular
system can be constructed and the system can be solved in $O(U^2)$
arithmetic operations.  These counts are verified at the end of the proof.

\begin{lemma}\label{lemma:recovery}
The polynomial $\Phi_T$ determines $n$, $k$, every $m_a$, and every
$e_{ab}$.
\end{lemma}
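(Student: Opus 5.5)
The plan is to read off the invariants layer by layer from the Master Formula (Proposition~\ref{prop:master}), using the fact that the summand for $F$ has exactly $k-|F|$ power-sum factors. The layers I need are $F=E(S)$ (one factor), $F=\emptyset$ (the $k$-factor layer) and $|F|=1$ (the $(k-1)$-factor layer).

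\emph{Step 1: recover $n$ and $k$.} Every nonzero term of $\Phi_T$ has degree $n-1$. Also $\Phi_T\neq0$, since for instance the single term with $F=E(S)$ survives. So the degree of $\Phi_T$ gives $n$. Next take $F=E(S)$: there is a single component $C_0=[k]$, and the term is
\[
\varepsilon(-1)^{k-1}(n-k)\,p_{n-1}.
\]
Since $n-1\ge2$, this term survives $\pi$. It is the only term with one power-sum factor, and $\varepsilon(-1)^{k-1}=(-1)^n$. Hence
\[
[p_{n-1}]\Phi_T=(-1)^n(n-k).
\]
This gives $k$, and then the sign $\varepsilon=(-1)^{n-k-1}$ is known.

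\emph{Step 2: recover $P_T$, hence every $m_a$.} For $F=\emptyset$, the components are the singletons and the term is $\varepsilon\sum_i (w_i-1)p_{w_i-1}P_T/p_{w_i}$. Applying $\pi$ kills exactly the terms with $w_i=2$. What remains is precisely $\varepsilon\,\Delta P_T$, so
\[
[\Phi_T]_k=\varepsilon\,\Delta P_T.
\]
If this layer vanishes, Lemma~\ref{lem:recoverP} gives $P_T=p_2^{k}$ with $k$ already known. Otherwise $\operatorname{LT}(\Delta P_T)=\varepsilon\operatorname{LT}([\Phi_T]_k)$ determines $P_T$ by Lemma~\ref{lem:recoverP}. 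Either way all $m_a$ are determined.

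\emph{Step 3: recover the $e_{ab}$.} Take $F=\{uv\}$ with $\{\mathbf w(u),\mathbf w(v)\}=\{a,b\}$. The merged component has weight $a+b$ and size $2$, contributing $(a+b-2)p_{a+b-1}P_{T,ab}$. Each other singleton $\{j\}$ contributes $(w_j-1)p_{w_j-1}p_{a+b}P_{T,ab}/p_{w_j}$. After $\pi$ the singleton contributions with $w_j=2$ vanish, so their sum is $p_{a+b}\,\Delta P_{T,ab}$. The merged term survives $\pi$ because $a+b-1\ge3$. Thus the edge $uv$ contributes $-\varepsilon\,\Theta_{ab}$, and
\[
[\Phi_T]_{k-1}=-\varepsilon\sum_{\{a,b\}\text{ feasible}}e_{ab}\,\Theta_{ab}.
\]
The $\Theta_{ab}$ are computable from $P_T$ alone, and they are linearly independent by Lemma~\ref{lem:theta}. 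Therefore the coefficients $e_{ab}$ are uniquely determined.

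For the complexity remarks, I would note three things. First, the relation $\sum e_{ab}=k-1$ lets one drop one unknown, which is why only $h-1$ coefficients are needed. Second, the proof of Lemma~\ref{lem:theta} shows that the coefficient of $P_TM_{ab}$ involves only pairs with smaller $a+b$, so the system is triangular. Third, $h\le\binom{U+1}{2}$, which gives both the count of selected coefficients and the $O(U^2)$ bound. The main obstacle is the bookkeeping in Steps 2 and 3: checking that $\pi$ removes exactly the $p_1$ terms so that the layers match $\Delta P_T$ and $\Theta_{ab}$ on the nose, and that no term from another $F$ lands in the same layer. The latter is guaranteed by the factor count in Proposition~\ref{prop:master}.
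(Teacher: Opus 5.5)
Your proposal is correct and follows the paper's proof essentially step for step: the unique one-factor term from $F=E(S)$ gives $n$ (via the degree) and then $k$; the $k$-factor layer equals $\varepsilon\Delta P_T$, which yields $P_T$ by Lemma~\ref{lem:recoverP}; and the $(k-1)$-factor layer equals $-\varepsilon\sum e_{ab}\Theta_{ab}$, whose coefficients are unique by Lemma~\ref{lem:theta}. The one point to state explicitly is that $n-k$, the number of leaves of $T$, is positive, since this is what makes the surviving $p_{n-1}$ coefficient nonzero and hence gives $\Phi_T\neq0$.
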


\begin{proof}
A term in the master formula has one factor only when $F=E(S)$.  The
skeleton forest then has the single component $[k]$, so
\begin{equation}\label{eq:one}
[p_{n-1}]\Phi_T
=\varepsilon(-1)^{k-1}(n-k)
=(-1)^{n-2}(n-k).
\end{equation}
Since $n-k>0$, the polynomial $\Phi_T$ is nonzero.  The master formula is
homogeneous of degree $n-1$, so
\[
n=1+\deg\Phi_T.
\]
Equation~\eqref{eq:one} then determines $k$.

The $k$-factor layer comes only from $F=\varnothing$.  Choosing vertex $i$
as the distinguished component gives
\[
(w_i-1)p_{w_i-1}\prod_{j\ne i}p_{w_j}.
\]
After applying $\pi$ and summing over $i$, we obtain
\begin{equation}\label{eq:weights}
[\Phi_T]_k
=\varepsilon\,\pi\left(
\sum_{i=1}^k(w_i-1)p_{w_i-1}\prod_{j\ne i}p_{w_j}
\right)
=\varepsilon\Delta P_T.
\end{equation}
Since $n$ and $k$ are known, so is $\varepsilon$, and hence $\Phi_T$
determines $\Delta P_T=\varepsilon[\Phi_T]_k$.  By
Lemma~\ref{lem:recoverP}, we recover $P_T$ from $\Delta P_T$, because
$P_T$ is a power-sum monomial in $p_2,p_3,\ldots$, and the lemma states
that such a monomial $Q$ equals $p_2^d$ for some $d\ge0$ when
$\Delta Q=0$ and is otherwise uniquely determined by
$\operatorname{LT}(\Delta Q)$.  If this layer vanishes, then $P_T=p_2^k$,
since $P_T$ has exactly $k$ factors.  In either case every $m_a$ is
determined.

The $(k-1)$-factor layer comes from sets containing one skeleton edge.
Suppose its endpoint weights are $a$ and $b$.  If the joined component is
distinguished, its contribution is
\[
(a+b-2)p_{a+b-1}P_{T,ab}.
\]
If a remaining singleton component is distinguished, the total contribution
is
\[
p_{a+b}\Delta P_{T,ab}.
\]
A one-edge set has sign $-1$, so summing over the skeleton edges gives
\begin{equation}\label{eq:edges}
[\Phi_T]_{k-1}
=-\varepsilon\sum_{a\le b}e_{ab}\Theta_{ab}.
\end{equation}
Every pair $\{a,b\}$ with $e_{ab}>0$ is feasible, and each $\Theta_{ab}$
depends only on $P_T$, which is already known.  By Lemma~\ref{lem:theta},
we get every $e_{ab}$ from $[\Phi_T]_{k-1}$, because the polynomials
$\Theta_{ab}$ indexed by feasible pairs are linearly independent over
$\Q$, so the coefficients $e_{ab}$ in the expansion \eqref{eq:edges} of
$-\varepsilon[\Phi_T]_{k-1}$ in terms of these polynomials are unique.

Let the feasible pairs be
\[
q_i=(a_i,b_i),
\qquad
1\le i\le h,
\]
ordered by increasing $a_i+b_i$, with ties ordered arbitrarily.  Let
\[
L_i=P_TM_{a_ib_i},
\qquad
A_{ij}=[M_{a_ib_i}]\frac{\Theta_{a_jb_j}}{P_T}.
\]
Each $L_i$ is a power-sum monomial because $q_i$ is feasible.  By the
proof of Lemma~\ref{lem:theta}, we get that $A$ is lower triangular with
\[
A_{ii}=a_i+b_i-2+(a_i+b_i-1)m_{a_i+b_i}>0,
\]
because that proof shows that $M_{a_ib_i}$ occurs in
$\Theta_{a_jb_j}/P_T$ for $j\ne i$ only when
\[
a_i+b_i=a_j+b_j+1,
\]
which forces $j<i$ in the chosen ordering, and that the coefficient of
$M_{a_ib_i}$ in $\Theta_{a_ib_i}/P_T$ is the displayed value.
Equation~\eqref{eq:edges} gives
\[
-\varepsilon[L_i]\Phi_T
=
\sum_{j=1}^h A_{ij}e_{a_jb_j}.
\]
The first $h-1$ equations therefore recover
$e_{a_1b_1},\ldots,e_{a_{h-1}b_{h-1}}$.  The remaining count follows from
\[
e_{a_hb_h}=k-1-\sum_{j=1}^{h-1}e_{a_jb_j}.
\]
There are
\[
h=\binom{U}{2}+|\{a\ge2\mid m_a\ge2\}|
\le\binom{U+1}{2}
\]
feasible pairs.  Thus the procedure uses $[p_{n-1}]\Phi_T$, the leading
coefficient of $[\Phi_T]_k$ when that layer is nonzero, and $[L_i]\Phi_T$
for $1\le i<h$.  This gives at most $1+\binom{U+1}{2}$ selected coefficient values.
The indices in the last group are chosen only after $P_T$ has been recovered.
If $h=0$, then $k=1$, and $n$ is the sole skeleton weight.  Finally, using
the common base $P_T$, each $L_i$ is represented by the exponent changes in
$M_{a_ib_i}$ and therefore takes $O(1)$ data.  The matrix has $h$ rows and
columns and
$O(h)=O(U^2)$ nonzero entries.  It can therefore be constructed and solved
in $O(U^2)$ arithmetic operations once $P_T$ is known.
\end{proof}

\par\medskip
\palm*

\begin{proof}
Let $(S,\mathbf w)$ and $(S',\mathbf w')$ be the weighted skeletons of
$T$ and $T'$.  By Lemma~\ref{lemma:recovery}, we get that $T$ and $T'$
have the same order $n$, the same number $k$ of non-leaf vertices, the
same weight multiplicities $m_a$, and the same edge-type counts $e_{ab}$,
because each of these quantities is determined by $\Phi_T=\Phi_{T'}$.
If $k=1$, both skeletons consist of a single vertex of weight $n$.  If
$k=2$, both consist of a single edge, and the multiset of its two
endpoint weights is given by the $m_a$.  In these cases
$(S,\mathbf w)\cong(S',\mathbf w')$, so assume $k\ge3$.

For each weight $a$, the degrees of the weight-$a$ vertices of a skeleton
sum to
\[
d_a=2e_{aa}+\sum_{b\ne a}e_{\min(a,b),\max(a,b)},
\]
since an edge with both endpoints of weight $a$ adds $2$ to this sum and
an edge with exactly one endpoint of weight $a$ adds $1$.  This value is
the same for $S$ and $S'$.  Suppose $m_a>1$.  Every weight-$a$ vertex of
$S$ is a leaf because $(S,\mathbf w)$ is multiplicity-isolated, so
$d_a=m_a$.  Every vertex of $S'$ has degree at least $1$ because $S'$ is
a tree with $k\ge2$ vertices, so the $m_a$ vertices of weight $a$ in
$S'$ have degrees summing to $m_a$ and are all leaves.  Thus
$(S',\mathbf w')$ is also multiplicity-isolated.

Call a weight $a$ \emph{unique} if $m_a=1$ and \emph{repeated} if
$m_a\ge2$.  In each skeleton every vertex with a repeated weight is a
leaf, so every non-leaf vertex has a unique weight.  A tree with at least
three vertices has no edge joining two leaves, so every edge of $S$ or
$S'$ has a non-leaf endpoint, and hence an endpoint with a unique weight.
In particular, $e_{ab}=0$ when $a$ and $b$ are both repeated, and
$e_{aa}=0$ for every $a$, since a unique weight occurs at only one
vertex.

For a unique weight $a$, let $u_a$ and $u'_a$ be the vertices of weight
$a$ in $S$ and in $S'$.  For unique weights $a<b$, the vertices $u_a$
and $u_b$ are the only vertices of weights $a$ and $b$ in $S$, so they
are adjacent exactly when $e_{ab}=1$, and likewise $u'_a$ and $u'_b$ are
adjacent in $S'$ exactly when $e_{ab}=1$.  For a unique weight $a$ and a
repeated weight $b$, let $L_{ab}$ be the set of weight-$b$ leaves of $S$
adjacent to $u_a$, and let $L'_{ab}$ be the corresponding set in $S'$.
Every edge with endpoint weights $a$ and $b$ joins $u_a$ to a weight-$b$
leaf, so
\[
|L_{ab}|=|L'_{ab}|=e_{\min(a,b),\max(a,b)}.
\]
Every vertex of $S$ is either $u_a$ for a unique weight $a$ or a leaf
with a repeated weight $b$, and such a leaf lies in exactly one of the
sets $L_{ab}$, namely the one indexed by the weight $a$ of its unique
neighbour.  The same holds for $S'$.

Define $\varphi\colon V(S)\to V(S')$ by $\varphi(u_a)=u'_a$ for every
unique weight $a$, and on each $L_{ab}$ by an arbitrary bijection
$L_{ab}\to L'_{ab}$.  Then $\varphi$ is a bijection and
$\mathbf w'(\varphi(x))=\mathbf w(x)$ for every $x\in V(S)$.  An edge of
$S$ either joins $u_a$ to $u_b$ for unique weights $a<b$, in which case
$e_{ab}=1$ and $u'_au'_b\in E(S')$, or joins $u_a$ to a leaf in
$L_{ab}$, whose image lies in $L'_{ab}$ and is therefore adjacent to
$u'_a$.  Thus $\varphi$ maps edges to edges, and since $S$ and $S'$
both have $k-1$ edges, it maps $E(S)$ onto $E(S')$.  Hence $\varphi$ is
an isomorphism of weighted trees.  Attaching $\mathbf w(i)-1$ new leaves
to each vertex $i$ recovers a proper tree from its weighted skeleton, so
$T\cong T'$.
\end{proof}

\section{Uniform Leaf Extensions}
\label{sec:other-results}
In this section we prove Theorem~\ref{thm:Cq}.

For $q\ge2$, let
$\psi_q\colon\Q[p_1,p_2,\ldots]\to\Q[p_1,p_2,\ldots]$ be the ring homomorphism defined by
\[
\psi_q(p_r)=p_{qr}.
\]
Define the linear operator $\mathcal D_q$ by
\[
\mathcal D_qf
=
\pi\left(
\sum_{r\ge1}r p_{qr-1}
\psi_q\left(\frac{\partial f}{\partial p_r}\right)
\right).
\]
The map $\psi_q$ sends distinct power-sum monomials to distinct power-sum
monomials, so it is injective.  The $r$th summand in $\mathcal D_qf$
contains the factor $p_{qr-1}$.  These factors are distinct for different
values of $r$, and none can occur in
\[
\psi_q\left(\frac{\partial f}{\partial p_r}\right),
\]
whose variables have indices divisible by $q$.  Hence terms from different
summands cannot cancel.

If $q\ge3$, none of the factors $p_{qr-1}$ is removed by $\pi$.  Thus
$\mathcal D_qf=0$ implies
\[
\psi_q\left(\frac{\partial f}{\partial p_r}\right)=0
\]
for every $r\ge1$.  Injectivity gives $\partial f/\partial p_r=0$ for
every $r$, so $f\in\Q$.  If $q=2$, only the factor $p_1$ from $r=1$ is
removed.  Thus $\partial f/\partial p_r=0$ for every $r\ge2$, so
$f\in\Q[p_1]$.  Therefore
\begin{equation}\label{eq:Dq-kernel}
\ker\mathcal D_q=
\begin{cases}
\Q,&q\ge3,\\
\Q[p_1],&q=2.
\end{cases}
\end{equation}

Let $G$ have order $t$.  The construction of $C_q(G)$ attaches $(q-1)t$
new leaves.  By \eqref{eq:stanley-derivative} and Lemma~\ref{lem:pi},
$\Phi_{C_q(G)}$ is the sum of the signed power-sum monomials of spanning
subgraphs with no isolated vertex after a vertex deletion.  Deleting an
original vertex leaves one of its attached leaves isolated, so only
deletions of attached leaves occur in this sum.  After one attached leaf
is deleted, all $(q-1)t-1$ remaining edges to attached leaves must be
selected, since omitting any such edge would isolate its leaf endpoint.
Writing $F\subseteq E(G)$ for the selected original edges, the sign of
the resulting edge set is therefore $(-1)^{(q-1)t-1}(-1)^{|F|}$.
If the deleted leaf was attached to a vertex in a component $C$ of
$(V(G),F)$, then there are $|C|$ choices for that vertex and $q-1$
choices for the leaf.  This gives the factor $(q-1)|C|$.  The component
containing the deleted leaf's neighbour has order $q|C|-1$, and every
other component $D$ has order $q|D|$.  Applying $\pi$ removes exactly
the cases with $q|C|-1=1$.  Summing over $F$ and its components gives
\[
\Phi_{C_q(G)}=
(-1)^{(q-1)t-1}(q-1)
\pi\left(
\sum_{F\subseteq E(G)}(-1)^{|F|}
\sum_{C\in\Comp_G(F)}
|C|p_{q|C|-1}
\prod_{D\in\Comp_G(F)\setminus\{C\}}p_{q|D|}
\right).
\]
Applying $\mathcal D_q$ yields
\begin{equation}\label{eq:Cq-operator}
\Phi_{C_q(G)}
=(-1)^{(q-1)t-1}(q-1)\mathcal D_qX_G.
\end{equation}

\Cq*

\begin{proof}
The reverse implication follows from homogeneity and
\eqref{eq:Cq-operator}.  For the forward implication, suppose the common
$\Phi$ is nonzero.  Its degree gives $|V(G)|=|V(H)|=t$, and
\eqref{eq:Cq-operator} gives
\[
X_G-X_H\in\ker\mathcal D_q.
\]
If $q\ge3$, homogeneity and \eqref{eq:Dq-kernel} give $X_G=X_H$.  If
$q=2$, then $X_G-X_H=cp_1^t$.  Since
$[p_1^t]X_G=[p_1^t]X_H=1$, again $c=0$.

If the common $\Phi$ is zero, \eqref{eq:Dq-kernel} forces $q=2$ and
$X_G\in\Q[p_1]$, and likewise for $H$.  But a connected graph of order
$t>1$ has
\[
[p_2p_1^{t-2}]X_G=-|E(G)|\ne0.
\]
Hence $G\cong H\cong K_1$.
\end{proof}

\medskip

\section{Connected-Partition Truncations}
\noindent In this section we prove Theorem~\ref{thm:counting}.

Let $\mathcal T_k$ be the set of isomorphism classes of $k$-vertex trees.
For $S\in\mathcal T_k$ and $F\subseteq E(S)$, the \emph{component type}
of $F$ is the integer partition of $k$ formed by the orders of the
connected components of the spanning forest $(V(S),F)$.
For $\mu\vdash k$, let $N_\mu(S)$ be the number of edge sets
$F\subseteq E(S)$ of component type $\mu$.

A \emph{connected partition} of $S$ is a set partition $\mathcal B$
of $V(S)$ such that the subgraph induced by each block is connected.
Its \emph{type} is the integer partition formed by the block orders.

For each edge set $F\subseteq E(S)$ counted by $N_\mu(S)$, the component
vertex sets of $(V(S),F)$ form a connected partition of type $\mu$.
Conversely, because $S$ is a tree, each connected partition determines
a unique such edge set, consisting of all edges of $S$ whose endpoints
lie in the same block. Thus $N_\mu(S)$ also counts the connected
partitions of $S$ of type $\mu$. If $\mathcal B$ has type $\mu$ and
corresponds to the edge set $F$, then
\[
|F|=\sum_{B\in\mathcal B}(|B|-1)
=k-|\mathcal B|
=k-\ell(\mu).
\]
We call this number the \emph{rank} of the connected partition.

For each nonnegative integer $t$, let
\[
\mathcal P_{k,t}
=\{\mu\vdash k\mid \ell(\mu)\ge k-t\},
\]
and define
\[
\sigma_t\colon\mathcal T_k\longrightarrow
\mathbb Z_{\ge0}^{\mathcal P_{k,t}},
\qquad
\sigma_t(S)=\bigl(N_\mu(S)\bigr)_{\mu\in\mathcal P_{k,t}}.
\]
Thus $\sigma_t(S)$ records the number of connected partitions of each
type having rank at most $t$, or equivalently, the number of edge sets
of each component type among those containing at most $t$ edges.
We call it the \emph{rank-$t$ connected-partition truncation}.
For $0\le s\le t$, $\sigma_s(S)$ is obtained from $\sigma_t(S)$ by
retaining only the entries of rank at most $s$.

Let
\[
\tau(k)=
\min\Bigl(
\{t\in\mathbb Z_{\ge0}\mid
\sigma_t\text{ is injective on }\mathcal T_k\}\cup\{\infty\}
\Bigr).
\]
Thus $\tau(k)$ is the least rank through which the connected-partition
counts distinguish all $k$-vertex trees up to isomorphism, or $\infty$
if no such rank exists.

Since every edge set in a tree is acyclic, Stanley's power-sum expansion
takes the form
\begin{equation}\label{eq:truncation-csf}
X_S
=
\sum_{\mu\vdash k}
(-1)^{k-\ell(\mu)}N_\mu(S)p_\mu.
\end{equation}
For $k\ge2$, the only coordinate of $\sigma_{k-1}(S)$ omitted from
$\sigma_{k-2}(S)$ is $N_{(k)}(S)=1$. Hence $\sigma_{k-2}(S)$ and $X_S$
determine one another. Therefore $\tau(k)<\infty$ exactly when the
chromatic symmetric function distinguishes all trees in $\mathcal T_k$,
and $\tau(k)\le k-2$ whenever it is finite.

The truncation records how many edge sets have each component type,
but not which edge sets give that type.

Write $P_m$ for the path on $m$ vertices.  For $k\ge4$ and
$0\le a\le k-2$, let $T_{k,a}$ be obtained from
\[
P_{k-1}=v_0v_1\cdots v_{k-2}
\]
by attaching a new leaf at $v_a$.  Reflection of the path gives
$T_{k,a}\cong T_{k,k-2-a}$, so the canonical choices are
\[
0\le a\le\left\lfloor\frac{k-2}{2}\right\rfloor.
\]
These canonical trees are pairwise nonisomorphic.  The tree $T_{k,0}$ is
$P_k$.  For $a\ge1$, the unique vertex of degree $3$ has branches of
lengths $1,a,k-2-a$, which determine the canonical value of $a$.

\begin{proposition}\label{prop:path-leaf-family}
Let $k\ge4$ and
\[
0\le a<b\le\left\lfloor\frac{k-2}{2}\right\rfloor.
\]
Then
\[
\sigma_{a+1}(T_{k,a})=\sigma_{a+1}(T_{k,b}).
\]
Moreover,
\[
N_{(a+3,1^{k-a-3})}(T_{k,a})=k-2
\quad\text{and}\quad
N_{(a+3,1^{k-a-3})}(T_{k,b})=k-1.
\]
Hence $a+2$ is the least rank at which their connected-partition counts
differ.
\end{proposition}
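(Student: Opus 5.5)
We compare the two trees through a bijection between their small connected partitions. A connected partition of rank $r\le a+1$ of a tree $S$ consists of blocks $B$ with $\sum_B(|B|-1)=r$, so every nonsingleton block has at most $a+2$ vertices. Write $T_{k,c}$ as the path $P_{k-1}=v_0\cdots v_{k-2}$ together with the extra leaf $u$ attached at $v_c$. A connected partition of $T_{k,c}$ is then a family of vertex-disjoint subpaths of $P_{k-1}$, of orders at least $2$, plus one of two options for $u$: either $u$ is a singleton, or $u$ joins the block containing $v_c$, which may itself be just $\{v_c\}$.

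The plan is to count, for a fixed type $\mu$ of rank at most $a+1$, the connected partitions of $T_{k,c}$ and show the count is independent of $c$ as long as $c\ge a$. Split the count according to the fate of $u$.

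The partitions with $u$ a singleton are exactly the connected partitions of $P_{k-1}$ of the corresponding type. This count does not depend on $c$.

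In the partitions with $u$ in a block $B$, the set $B\setminus\{u\}$ is a subpath containing $v_c$ of some order $j$, where $1\le j\le a+1$ because $|B|\le a+2$. The remaining nonsingleton blocks are disjoint subpaths lying in the left segment $v_0\cdots$ and the right segment $\cdots v_{k-2}$ outside $B\setminus\{u\}$. The key step is to reflect the part to the left of $v_c$ into the right side, or equivalently to show that the multiset of pairs (left length, right length) for subpaths of order $j$ through $v_c$ does not depend on $c$ when $c\ge j-1$ and $k-2-c\ge j-1$. A subpath of order $j$ containing $v_c$ has $i$ vertices to the left of $v_c$ with $0\le i\le j-1$, and it is available for all such $i$ exactly when $c\ge j-1$ and $k-2-c\ge j-1$. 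In that range the segment lengths are $c-i$ and $k-2-c-(j-1-i)$.

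This is where the construction is most delicate, since the lengths themselves do change with $c$. What we need instead is that the number of ways to place the remaining blocks (small paths, total rank at most $a+1-(j-1)$) into two disjoint paths of lengths $L$ and $R$ depends only on $L+R$ once both lengths are large enough. Here large enough means at least the total number of non-singleton vertices left to place. That claim is false in general, because splitting a path changes the adjacency of blocks.

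So the plan is to avoid exact segment counts and use a direct bijection instead. Given $c\ge a$, and $c\le k-2-c$ by the canonical choice, all nonsingleton blocks together span at most $2(a+1)$ vertices. Compare $T_{k,a}$ and $T_{k,b}$ through the map that relocates $u$ together with its block's path portion. Concretely, think of the tree as $P_k=u\,v_c\ldots$ glued differently, and build an explicit type-preserving bijection by cutting the path at a position where no block can reach. That such a position exists is a pigeonhole argument, since the nonsingleton blocks cover few vertices. Making this bijection rigorous is the main obstacle. I would first try proving the equality via the generating-function identity for path partitions, namely transfer matrices in which one variable per block order records the block and singletons are weighted by $1$, truncated at degree $a+1$ in the rank. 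Under this truncation, the endpoint-sensitive correction terms vanish when both arms have length at least $a+1$.

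For the "moreover" part, count connected subtrees of order $a+3$ directly. Blocks of type $(a+3,1^{\dots})$ correspond to connected subtrees on $a+3$ vertices. In $P_{k-1}$ there are $k-1-(a+2)=k-a-3$ of them. The subtrees through $u$ correspond to subpaths of order $a+2$ containing $v_c$. There are $a+2$ of these when both arms are long enough, which holds for $c=b$, while for $c=a$ the left arm has only $a$ vertices, so there are $a+1$. This gives $k-1$ for $T_{k,b}$ and $k-2$ for $T_{k,a}$, which shows the counts first differ at rank $a+2$.
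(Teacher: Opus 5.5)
Your count for the second assertion is correct and matches the paper's count: there are $k-a-3$ subpaths of order $a+3$ in $P_{k-1}$, plus $a+2$ (for $c=b$) or $a+1$ (for $c=a$) subpaths of order $a+2$ through $v_c$, each extended by $u$.

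The gap is the first assertion, $\sigma_{a+1}(T_{k,a})=\sigma_{a+1}(T_{k,b})$, which the proposal never establishes.
\begin{itemize}
\item The bijection obtained by ``cutting the path at a position where no block can reach'' is never constructed, and you say yourself that making it rigorous is the main obstacle.
\item The transfer-matrix remark that the endpoint-sensitive corrections vanish only restates the proposition. As stated, its hypothesis (both arms of length at least $a+1$) does not even cover $T_{k,a}$, whose short arm has length $a$.
\end{itemize}
The paper supplies the missing argument by comparing adjacent trees. The only edge sets whose types differ between $T_{k,c}$ and $T_{k,c+1}$ are those containing the leaf edge but not $v_cv_{c+1}$, so $Z_{T_{k,c}}-Z_{T_{k,c+1}}=z(B_{c+1}A_{k-c-2}-A_{c+1}B_{k-c-2})$. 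An involution on pairs of compositions, swapping the prefixes up to their least common partial sum, cancels every term of $z$-degree below $c+1$. Telescoping from $c=a$ to $c=b-1$ then gives the equality.

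Your first route would also have worked, and your reason for abandoning it is mistaken. Fix blocks of prescribed orders with total rank $\rho$. The number of ways to place them in $P_L\sqcup P_R$ does depend only on $L+R$ once $L,R\ge\rho$. The right threshold is the rank, not the number of covered vertices.
\begin{itemize}
\item For one block of order $s$, the count is $(L-s+1)+(R-s+1)$ whenever $L,R\ge s-1$.
\item In general, induct on the number of blocks. The count equals the number of placements in $P_{L+R+1}$ minus those covering vertex $L+1$. Split the latter by the block covering that vertex (order $s$, one of $s$ offsets). The rest then sits in two paths of total length $L+R+1-s$, each of length at least $\rho-(s-1)$. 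So by the inductive hypothesis this depends only on $L+R$ when at least $\rho$ vertices lie on each side.
\end{itemize}
In your decomposition this lemma applies directly:
\begin{itemize}
\item The two segments have $c-i\ge a+1-j$ and $k-1-c-j+i\ge a+1-j$ vertices, and $a+1-j$ bounds the remaining rank.
\item Their total $k-1-j$ is independent of $c$ and $i$.
\item All $j$ offsets $i$ are available for every $c$ with $a\le c\le\lfloor(k-2)/2\rfloor$.
\end{itemize}
Proving this lemma would therefore close the gap by a route genuinely different from the paper's. It trades the composition involution for a translation-invariance statement about placements of blocks in a path.
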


\begin{proof}
For a graph $G$, define
\[
Z_G(z)=\sum_{F\subseteq E(G)}z^{|F|}
\prod_{C\in\Comp_G(F)}p_{|C|}.
\]
Thus, when $G$ is a tree,
\[
[z^{|V(G)|-\ell(\mu)}p_\mu]Z_G=N_\mu(G).
\]

A \emph{composition} $\alpha\models m$ is an ordered list of positive
integers with sum $m$.  Its length is $\ell(\alpha)$.  For $m\ge1$, let
\[
A_m=\sum_{\alpha\models m}z^{m-\ell(\alpha)}
p_{\alpha_1}\cdots p_{\alpha_{\ell(\alpha)}}
\]
and
\[
B_m=\sum_{\alpha\models m}z^{m-\ell(\alpha)}
p_{\alpha_1+1}p_{\alpha_2}\cdots p_{\alpha_{\ell(\alpha)}}.
\]
The polynomial $A_m$ enumerates the edge subsets of a path on $m$
vertices.  The polynomial $B_m$ records the result of enlarging the
component at a specified endpoint by one vertex.  The new edge is not
included in its $z$-degree.

We first show that, whenever $1\le l\le r$, the polynomial
\[
B_lA_r-A_lB_r
\]
has no term of $z$-degree less than $l$.  The claim is immediate when
$l=r$, so suppose $l<r$.  A term in either product is indexed by
compositions $\alpha\models l$ and $\beta\models r$.  Its $z$-degree is
\[
d=(l-\ell(\alpha))+(r-\ell(\beta)).
\]
Regard a composition of $m$ as a choice of breaks among $m-1$ positions.
Then $m-\ell(\alpha)$ is the number of missing breaks of $\alpha$.

Suppose $d<l$.  The two compositions must have a common positive partial
sum at most $l$, where the full sum $l$ is allowed for $\alpha$.  Indeed,
if they did not, then at each position from $1$ through $l-1$ at least one
composition would have no break.  The composition $\beta$ would also have
no break at $l$.  These $l$ missing breaks would give $d\ge l$.

Let $s$ be the least common positive partial sum and write
\[
\alpha=\alpha'\alpha'',
\qquad
\beta=\beta'\beta'',
\qquad
|\alpha'|=|\beta'|=s.
\]
The suffix $\alpha''$ may be empty.  Replace $(\alpha,\beta)$ by
\[
(\beta'\alpha'',\alpha'\beta'').
\]
This operation is an involution.  It preserves the $z$-degree and every
power-sum factor, while moving the increment on the first part from the
composition of $l$ to the composition of $r$.  It therefore pairs all terms
of degree less than $l$ in $B_lA_r$ with those in $A_lB_r$.

Now compare $T_{k,c}$ and $T_{k,c+1}$.  If the new leaf edge is omitted,
the same edge subsets occur in both trees.  If that edge and
$v_cv_{c+1}$ are both selected, the new leaf is added to the same path
component in both trees.  When the leaf edge is selected and
$v_cv_{c+1}$ is omitted, the path splits into paths on $c+1$ and
$k-c-2$ vertices.  Therefore
\[
Z_{T_{k,c}}(z)-Z_{T_{k,c+1}}(z)
=z\bigl(B_{c+1}A_{k-c-2}-A_{c+1}B_{k-c-2}\bigr).
\]
For $a\le c<b\le\lfloor(k-2)/2\rfloor$, we have
$c+1\le k-c-2$.  By the preceding argument with $l=c+1$ and
$r=k-c-2$, we get that this difference has no term of degree at most
$c+1$, because $B_lA_r-A_lB_r$ has no term of $z$-degree less than $l$
whenever $1\le l\le r$, and the factor $z$ raises every degree by one.
In particular, it has no term of degree at
most $a+1$.  Telescoping from $c=a$ to $c=b-1$ gives
\[
\sigma_{a+1}(T_{k,a})=\sigma_{a+1}(T_{k,b}).
\]

It remains to count connected $(a+2)$-edge subtrees.  Those avoiding the
new leaf edge are intervals of $a+2$ edges in the original path, of which
there are $k-a-3$.  Those using the new leaf edge are obtained from an
interval of $a+1$ path edges containing the attachment vertex.  Since
$b\ge a+1$ and $k-2-b\ge b$, both sides of $v_b$ contain at least $a+1$
path edges.  The two sides of $v_a$ contain $a$ and at least $a+2$ path
edges.  There are therefore $a+2$ such intervals at $v_b$ and $a+1$ at
$v_a$.  The two total counts are $k-1$ and $k-2$.
\end{proof}

\counting*

\begin{proof}
Taking
\[
b=\left\lfloor\frac{k-2}{2}\right\rfloor
\quad\text{and}\quad
a=b-1,
\]
and using proposition~\ref{prop:path-leaf-family}, we get
$\sigma_b(T_{k,b-1})=\sigma_b(T_{k,b})$. The trees
$T_{k,b-1}$ and $T_{k,b}$ are nonisomorphic, so $\sigma_b$ is not
injective on $\mathcal T_k$.  Since $\sigma_s$ is obtained from
$\sigma_b$ by discarding entries whenever $s\le b$, no $\sigma_s$ with
$s\le b$ is injective either.  Hence
\begin{equation}\label{eq:tau-linear}
\tau(k)\ge b+1=\left\lfloor\frac{k}{2}\right\rfloor.
\end{equation}

This proves the first assertion. It remains to prove the second.

Fix $t$ and assume $k>t$.  Let $p(j)$ denote the number
of partitions of $j$, with $p(0)=1$.  For $0\le j\le t$, subtracting $1$
from every part and discarding any zero parts injects the partitions of $k$
with $k-j$ parts into the partitions of $j$.  The original partition is
recovered by adding $1$ to every remaining part and then adjoining enough
parts equal to $1$ to obtain $k-j$ parts.  There are therefore at most
$p(j)$ coordinates in the $j$-edge layer.  Each coordinate lies between
$0$ and $\binom{k-1}{j}$.  Hence the number $M_{k,t}$ of possible
truncations satisfies
\begin{equation}\label{eq:M}
M_{k,t}
\le
\prod_{j=0}^t
\left(\binom{k-1}{j}+1\right)^{p(j)}
\le
(2k^t)^{P(t)},
\qquad
P(t)=\sum_{j=0}^t p(j).
\end{equation}

Let $s_{k,t}$ be the number of trees in singleton fibers of $\sigma_t$.
Distinct such trees have distinct truncations, so
\[
s_{k,t}\le M_{k,t}.
\]
For fixed $t$, \eqref{eq:M} gives
\[
M_{k,t}\le(2k^t)^{P(t)}=k^{O_t(1)}.
\]
Otter's theorem~\cite{Otter1948} states that the number of unlabeled,
unrooted trees on $k$ vertices satisfies
\[
|\mathcal T_k|\sim C\alpha^k k^{-5/2}
\qquad(k\to\infty),
\]
for constants $C>0$ and $\alpha>1$. In particular,
$|\mathcal T_k|=\exp(\Theta(k))$.
Since $M_{k,t}$ grows at most polynomially in $k$ for fixed $t$ and
$s_{k,t}\le M_{k,t}$, we conclude that
\[
0\le
\frac{s_{k,t}}{|\mathcal T_k|}
\le
\frac{M_{k,t}}{|\mathcal T_k|}
\longrightarrow0
\qquad(k\to\infty).
\]
\end{proof}

\bibliographystyle{amsalpha}
\bibliography{references}

\end{document}